\theoremstyle{plain}
\newtheorem{thm}{Theorem}[section]
\newtheorem{lem}[thm]{Lemma}
\newtheorem{cor}[thm]{Corollary}
\theoremstyle{definition}
\newtheorem{defn}[thm]{Definition}
\numberwithin{equation}{subsection}
\theoremstyle{remark}
\newtheorem{claim}[thm]{Claim}
\newcommand{\cellsize}{10}
\newlength{\cellsz} \setlength{\cellsz}{\cellsize\unitlength}
\newsavebox{\cell}
\sbox{\cell}{\begin{picture}(\cellsize,\cellsize)
\put(0,0){\line(1,0){\cellsize}} \put(0,0){\line(0,1){\cellsize}}
\put(\cellsize,0){\line(0,1){\cellsize}}
\put(0,\cellsize){\line(1,0){\cellsize}}
\end{picture}}
\newcommand\cellify[1]{\def\thearg{#1}\def\nothing{}%
\ifx\thearg\nothing \vrule width0pt height\cellsz depth0pt\else
\hbox to 0pt{\usebox{\cell} \hss}\fi%
\vbox to \cellsz{ \vss \hbox to \cellsz{\hss$#1$\hss} \vss}}
\newcommand\tableau[1]{\vtop{\let\\\cr
\baselineskip -16000pt \lineskiplimit 16000pt \lineskip 0pt
\ialign{&\cellify{##}\cr#1\crcr}}}
\begin{document}

\begin{center}
{\large CHARACTERISTIC $2$ APPROACH TO BIVARIATE INTERPOLATION
PROBLEMS

\vspace{.5cm}
 Kyungyong Lee}

\vspace{.3cm} Department of Mathematics, University of Michigan, Ann
Arbor, Michigan, USA

kyungl@umich.edu
\end{center}






\begin{abstract}
We investigate bivariate interpolation problems in characteristic 2.
Given a nonnegative integer $t$, we describe all the sub-linear
systems generated by monomials, in which there is no curve passing
through a general point with multiplicity at least $2^t$. As an
application, we show that a certain linear system of plane curves
with 10 base points is non-special.
\end{abstract}

\section{Introduction}

We deal with bivariate interpolation problems in an infinite field
$\mathbb{K}$ of characteristic 2. Characteristic 2 condition is not
a very restrictive assumption because solvability of an
interpolation problem in characteristic 2 implies solvability of the
same problem in characteristic 0. Moreover working in characteristic
2 has many advantages. For instance, we do not need to take care of
signs when we compute the determinants of matrices.

For the reader of this paper, an acquaintance with the notions and
methods in \cite{BM:top}, \cite{LJR} would be very useful. We more
or less follow the notations of \cite{LL:biv}. Given a fixed set $S$
of lattice points $(i,j),i,j\geq 0$, the sub-linear system
$\mathcal{P}(S)$ with respect to $S$ consists of $$
P(x,y)=\sum_{(i,j) \in S} a_{i,j}x^i y^j \in \mathbb{K}[x,y].$$
Notice that unlike in \cite{LL:biv}, we do not necessarily assume
$S$ to be a lower set.

Throughout this note, the coordinates of lattice points are always
nonnegative. Let $T_m$ be the triangle of all $(i,j)$ with $i+j\leq
m-1$. $T_m$ contains $|T_m|=\frac{1}{2}m(m+1)$ lattice points.

For a set of $n$ distinct interpolation knots $Z=\{z_q:=(x_q,
y_q)\}_{q=1}^n$ in $\mathbb{K}^2$, it is interesting to study
(sub-)linear systems of plane curves passing through $Z$ with
multiplicity $\geq m_q$ at each point $z_q$ (for example, see
\cite{BM:top}, \cite{CM:degen}, \cite{CM:lin}, \cite{H:une},
\cite{LL:biv}, \cite{Na:hil}, \cite{Ha:nagata}, \cite{HR:nagata},
\cite{SS:nagata} and references therein). To put it in another way,
we are interested in solving the interpolation problem
$$
\frac{1}{\alpha ! \beta
!}\cdot{\frac{\partial^{\alpha+\beta}P}{\partial x^{\alpha}\partial
y^{\beta}}}\Big|_{z_q} = 0, \text{ }\text{ }\text{ }\text{ }\text{ }
(\alpha, \beta)\in T_{m_q}, \text{ }\text{ }\text{ }\text{ }\text{ }
q=1,\cdots, n. \text{ }\text{ }\text{ }\text{ }\text{ }\text{
}\text{ }\text{ }\text{ }\text{ }(*)
$$ Note that we do not necessarily require $|S|=\sum_{q=1}^n
|T_{m_q}|$. As in \cite{LL:biv}, we say that an interpolation scheme
is \emph{almost surely solvable} or \emph{almost regular} if $(*)$
is solvable for almost all $Z \in (\mathbb{K}^2)^n$. Since the right
hand sides in $(*)$ are $0$, our interpolation problem is almost
regular if and only if $(*)$ has only trivial solution for almost
all $Z$. We remark that if $S$ has a double element $(i,j)$ then a
nontrivial solution $P=x^i y^j +x^i y^j$ exists hence the
interpolation problem is never almost regular.

Since it is natural to ask which (sub-)linear systems are almost
regular, there has been some interest in trying to understand it.
But up to now, even in the case $n=|Z|=1$ there have been no
explicit criterions in positive characteristic, and no other
criterions in characteristic 0 than Bezout-Dumnicki lemma
\cite[Lemma 20]{Du:red} which gives a sufficient but not necessary
condition for a (sub-)linear system to be almost regular.

In this note, we completely solve the interpolation problem in
characteristic 2 in the case when $n=|Z|=1$ and $m=m_1=2^t$ $( t \in
\mathbb{N})$. In other words, given $t \in \mathbb{N}$, we describe
all the sub-linear systems generated by monomials, in which there is
no curve passing through a general point with multiplicity $\geq
2^t$. This case is already interesting in its own right, and is
indispensable for dealing with the cases of $n\geq 2$ knots (c.f.
\cite[Proposition 12]{Du:red}).

When $n=1$, our interpolation scheme $\langle S,T_m \rangle$ becomes
$$
\frac{1}{\alpha ! \beta
!}\cdot{\frac{\partial^{\alpha+\beta}P}{\partial x^{\alpha}\partial
y^{\beta}}}\Big|_{z_1} = 0, \text{ }\text{ }\text{ }\text{ }\text{ }
(\alpha, \beta)\in T_{m}\text{ }\text{ }\text{ }\text{ }\text{
}\text{ }\text{ }\text{ }\text{ }\text{ }(**).$$ Our main theorem
shows that the interpolation problem $\langle S, T_{2^{t}}\rangle$
is inductive on $t$, in other words, almost regularity of $\langle
S, T_{2^{t+1}} \rangle$ can be determined by some interpolation
problems with $T_{2^{t}}$.

\begin{thm}\label{main}
Let $\mathbb{K}$ be an infinite field of characteristic 2. The following statements are equivalent.\\
$(i)$ The interpolation problem $\langle S, T_{2^{t+1}} \rangle$ is almost regular.\\
$(ii)$ There is no triple $(U,V,W) \subset S|_{\emph{h,v}}$,
$S|_{\emph{v,d}}$, $S|_{\emph{d,h}}$ $\emph{(}$see Definition
~\ref{def_hvd}$\emph{)}$ of subsets such that
$$\aligned
\bullet\text{ each}&\text{ of }\sum_{(i,j) \in U} x^i y^j,
\sum_{(i,j) \in V} x^i y^j,\sum_{(i,j) \in W} x^i y^j \text{ is a
solution of }(**)\text{ for }m=2^t, \text{ } \text{ } \text{ }
\text{ } \text{ } \text{ } \text{ } \text{ } \text{ } \text{ }
\text{ } \text{ }\text{ } \text{ } \text{ } \text{ } \text{ } \text{
} \text{ } \text{ }
\text{ } \text{ } \text{ } \text{ } \text{ } \text{ } \\
\bullet\text{ at m}&\text{ost one of } U,V,W \text{is empty,}\text{
} \text{ } \text{ } \text{ } \text{ } \text{ } \text{ } \text{ }
\text{ } \text{ } \text{ } \text{ } \text{ } \text{ }\text{ } \text{
} \text{ } \text{ } \text{ } \text{ } \text{ } \text{ } \text{ }
\text{ } \text{ } \text{ } \text{ } \text{ }(\dag)\\
\bullet\text{ and } &(U-V) \cup (V-U)=W\\
&(V-W) \cup (W-V)=U\\
&(W-U) \cup (U-W)=V.
\endaligned$$
\end{thm}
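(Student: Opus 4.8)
The plan is to exploit the characteristic-2 identity $(P_0 + P_1 x^{2^t} + P_2 y^{2^t} + \cdots)^{?}$—more precisely, the key algebraic fact in characteristic $2$ that for any polynomial $P$ one has a unique decomposition according to the parity of exponents: writing $x^i y^j$ with $i = 2i' + \epsilon_1$, $j = 2j' + \epsilon_2$, every $P \in \mathbb{K}[x,y]$ splits as $P = A^2 + x B^2 + y C^2 + xy D^2$ with $A,B,C,D$ uniquely determined, and more generally one can iterate this $t$ times to expose a "$2^t$-th power" structure. The passage from $T_{2^{t+1}}$ to $T_{2^t}$ should come from the observation that $T_{2^{t+1}}$ is built from (roughly) three translated copies of $T_{2^t}$ plus a scaled copy $2 \cdot T_{2^t}$, and that vanishing to order $2^{t+1}$ at a general point is governed, after the Frobenius-type substitution, by vanishing to order $2^t$ of the "components." This is what produces exactly the three families $S|_{h,v}, S|_{v,d}, S|_{d,h}$ indexed by the three pairs of residues, and the relation $(U-V)\cup(V-U) = W$ etc. is precisely the record of which monomials survive when one adds the three component-polynomials and demands the cross-terms cancel in characteristic $2$.

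Concretely, I would proceed as follows. \emph{Step 1:} Reduce almost regularity to a generic-rank / non-vanishing-of-a-single-determinant statement, as already noted in the introduction (the system $(**)$ has only the trivial solution for almost all $Z$ iff it has it for one specific convenient $Z$, e.g. $z_1$ the origin after translation; translation is harmless since $\mathbb{K}$ is infinite and the conditions $(*)$ are translation-equivariant). So fix $z_1 = (0,0)$; then a nonzero solution is exactly a nonzero $P \in \mathcal{P}(S)$ all of whose coefficients on $T_{2^{t+1}}$ vanish, i.e. $P$ has no monomial of total degree $< 2^{t+1}$. \emph{Step 2:} Show $(ii) \Rightarrow (i)$ (equivalently $\neg(i) \Rightarrow \neg(ii)$): given a nonzero solution $P$ for $\langle S, T_{2^{t+1}}\rangle$, decompose $P$ via the characteristic-2 Frobenius splitting into pieces supported on the residue classes; each piece, after taking "square roots," must be a solution of $(**)$ for $m = 2^t$ on the appropriately defined subset ($S|_{h,v}$ from the $(0,0)$ and $(1,1)$-type parts, $S|_{v,d}$ and $S|_{d,h}$ from the others), the "at most one empty" condition reflects that $P \ne 0$ forces at least two pieces nonzero, and the three set-theoretic symmetric-difference relations come from matching the supports of the square-rooted pieces against $S$. \emph{Step 3:} Conversely, $(i) \Rightarrow (ii)$: given such a triple $(U,V,W)$ with the stated properties, assemble $P := (\sum_U x^i y^j) \cdot (\text{correction}) + \cdots$—really $P$ is reconstructed as $U$-polynomial squared times $1$ plus $V$-polynomial squared times $x$-or-$y$ plus $W$-polynomial squared times the remaining monomial—check using the symmetric-difference relations that the support of $P$ lands inside $S$ (here the relations are exactly designed so that repeated monomials cancel mod $2$ and the leftover support is $\subseteq S$), and check that $P$ vanishes to order $2^{t+1}$ at $z_1$ because each squared factor vanishes to order $2^t$ and squaring doubles the order in characteristic $2$.

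\emph{Main obstacle.} The hard part will be Step 2, getting the bookkeeping of supports exactly right so that the three subsets one extracts really do lie in $S|_{h,v}, S|_{v,d}, S|_{d,h}$ respectively and really do satisfy all three symmetric-difference identities simultaneously — in particular handling the monomials $x^i y^j$ with $i,j$ of mixed parity, where a single monomial of $P$ can feed into two different component-polynomials, which is the source of the "union of differences" shape of the relations rather than a plain partition. One must check that no monomial is lost or double-counted across the three residue-class projections, and that the degree bookkeeping under the Frobenius substitution sends "$< 2^{t+1}$" precisely to "$< 2^t$" on each component; I expect this to require a careful case analysis on the four parity classes of $(i,j)$ together with the precise combinatorial definition of $S|_{h,v}$ etc. from Definition~\ref{def_hvd}. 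The other steps are comparatively formal once the splitting lemma and the translation-invariance reduction are in place.
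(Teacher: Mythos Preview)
Your Step~1 contains a genuine error. The conditions $(**)$ are indeed translation-equivariant, but the constraint $P\in\mathcal{P}(S)$ is \emph{not}: a monomial subsystem $\mathcal{P}(S)$ is almost never closed under translation, so moving $z_1$ to the origin changes the problem. At the origin the interpolation in fact degenerates completely (the kernel is $\{P\in\mathcal{P}(S):a_{i,j}=0\text{ for }(i,j)\in T_m\}$, which is nonzero whenever $S\not\subseteq T_m$), so nothing about generic behaviour can be read off there. The correct reduction, which the paper uses implicitly when it declares $2^t$-independence to mean $\mathbb{F}_2$-linear independence of the vectors $v^t_{i,j}$, is different: for any $z_1=(x_1,y_1)$ with $x_1y_1\neq 0$ the interpolation matrix $\bigl(\binom{i}{a}\binom{j}{b}x_1^{i-a}y_1^{j-b}\bigr)$ is row- and column-equivalent, via diagonal scalings, to the constant matrix $\bigl(\binom{i}{a}\binom{j}{b}\bigr)$ with entries in $\mathbb{F}_2$. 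Everything then becomes pure $\mathbb{F}_2$-linear algebra on the $v^t_{i,j}$, not polynomial manipulation at the origin.

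There is a second, subtler mismatch. Your Frobenius splitting $P=A^2+xB^2+yC^2+xyD^2$ separates monomials by the \emph{lowest} bit of each exponent, whereas the sets $S|_{\text{h,v}}$, $S|_{\text{v,d}}$, $S|_{\text{d,h}}$ in Definition~\ref{def_hvd} are defined via translates by $2^t$, i.e.\ by the \emph{highest} bit inside $B_{t+1}$. The paper's inductive step accordingly decomposes the target space as $V^{t+1}\cong V^t_w\oplus V^t_y\oplus V^t_z$ according to whether $a\geq 2^t$ or $b\geq 2^t$, and uses the Lucas-type identities $\tau(v^{t+1}_{i,j}+v^{t+1}_{i+2^t,j})=\vec 0\oplus\vec 0\oplus v^t_{i,j}$ (and two analogues) to see how pairs in $S$ differing in the top bit project. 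Your proposal to ``iterate $t$ times'' does not bridge this: iterating parity all the way down recovers individual monomials rather than the top-bit pairing, and your Step~3 reconstruction (squaring the $U$-, $V$-, $W$-polynomials and multiplying by $1,x,y$) produces a polynomial whose support is governed by the bottom bits of $S$, not by the pairs in~(\ref{hvd_cond}), so it need not lie in $\mathcal{P}(S)$. To match the statement you would instead need the decomposition $P=A+x^{2^t}B+y^{2^t}C+x^{2^t}y^{2^t}D$ after first reducing $S$ into $B_{t+1}$ via Lemma~\ref{mod_two_red}; carried out in the $\mathbb{F}_2$-vector language, that is exactly the paper's argument.
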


The following two corollaries are often useful in practice.

\begin{cor}\label{cor1}
If the interpolation problem $\langle S, T_{2^{t+1}} \rangle$ is
almost regular, then the three of $\langle S|_{\emph{hori}},
T_{2^t}\rangle$, $\langle S|_{\emph{vert}}, T_{2^t} \rangle$, and
$\langle S|_{\emph{diag}}, T_{2^t}\rangle$ are all almost regular.
\end{cor}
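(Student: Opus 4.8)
The plan is to read Corollary~\ref{cor1} off Theorem~\ref{main} by contraposition. Suppose one of the three sub-problems $\langle S|_{\emph{hori}}, T_{2^t}\rangle$, $\langle S|_{\emph{vert}}, T_{2^t}\rangle$, $\langle S|_{\emph{diag}}, T_{2^t}\rangle$ is \emph{not} almost regular; I will manufacture a triple $(U,V,W)$ as in part $(ii)$ of Theorem~\ref{main}, which forces $\langle S, T_{2^{t+1}}\rangle$ to be not almost regular, and then contrapose. Because the roles of $h,v,d$ (and of $U,V,W$) in $(\dag)$ are cyclically symmetric, it is enough to treat the case in which $\langle S|_{\emph{hori}}, T_{2^t}\rangle$ fails to be almost regular.

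First I would isolate two elementary points. (a) Write $X\triangle Y:=(X-Y)\cup(Y-X)$; since $\triangle$ is associative and commutative with $X\triangle X=\emptyset$, the three equations $U\triangle V=W$, $V\triangle W=U$, $W\triangle U=V$ in $(\dag)$ are all equivalent to the single relation $U\triangle V\triangle W=\emptyset$, so it will suffice to arrange that. (b) From Definition~\ref{def_hvd}, each single-label restriction sits inside the two double-label restrictions sharing its letter: $S|_{\emph{hori}}\subseteq S|_{\emph{h,v}}\cap S|_{\emph{d,h}}$, $S|_{\emph{vert}}\subseteq S|_{\emph{h,v}}\cap S|_{\emph{v,d}}$, $S|_{\emph{diag}}\subseteq S|_{\emph{v,d}}\cap S|_{\emph{d,h}}$; hence a subset of $S|_{\emph{hori}}$ may simultaneously be regarded as a subset of $S|_{\emph{h,v}}$ and of $S|_{\emph{d,h}}$.

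Now, since $\langle S|_{\emph{hori}}, T_{2^t}\rangle$ is not almost regular, the combinatorial characterization of almost regularity for monomial sub-linear systems that underlies condition $(ii)$ supplies a nonempty $U_0\subseteq S|_{\emph{hori}}$ with $\sum_{(i,j)\in U_0}x^iy^j$ a solution of $(**)$ for $m=2^t$. Put $U:=U_0\subseteq S|_{\emph{h,v}}$, $W:=U_0\subseteq S|_{\emph{d,h}}$, and $V:=\emptyset\subseteq S|_{\emph{v,d}}$. Then $\sum_{(i,j)\in U}x^iy^j=\sum_{(i,j)\in W}x^iy^j$ solves $(**)$ for $m=2^t$ and $\sum_{(i,j)\in V}x^iy^j=0$ solves it trivially; exactly one of $U,V,W$ is empty, so ``at most one is empty'' holds; and $U\triangle V\triangle W=U_0\triangle\emptyset\triangle U_0=\emptyset$, so by (a) all of $(\dag)$ holds. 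Thus $(U,V,W)$ violates $(ii)$, and Theorem~\ref{main} gives that $\langle S, T_{2^{t+1}}\rangle$ is not almost regular; contraposing yields the corollary. The cases where $\langle S|_{\emph{vert}}, T_{2^t}\rangle$ or $\langle S|_{\emph{diag}}, T_{2^t}\rangle$ is the failing one are identical after cyclically permuting which of $U,V,W$ is taken empty.

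I anticipate no real difficulty here, since all the substance is in Theorem~\ref{main}. The only points that need a little care are the inclusions in (b)—checking against Definition~\ref{def_hvd} that each single-label restriction lands in both of the relevant double-label restrictions—and the bookkeeping that ``not almost regular'' for a monomial sub-linear system is genuinely witnessed by a nonempty subset whose monomial sum is a solution of $(**)$, so that the $U_0$ above exists and the triple we output has exactly the form demanded by $(ii)$.
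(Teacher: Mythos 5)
Your proposal is correct and takes essentially the same route as the paper: contrapose, obtain a nonempty $U_0\subseteq S|_{\text{hori}}$ witnessing failure of $\langle S|_{\text{hori}},T_{2^t}\rangle$, and plug in $(U,V,W)=(U_0,\emptyset,U_0)$ to violate $(ii)$ of Theorem~\ref{main}. The paper's proof is more terse and leaves implicit the two points you spell out (the inclusions $S|_{\text{hori}}\subseteq S|_{\text{h,v}}\cap S|_{\text{d,h}}$ and the symmetric-difference equivalence), but there is no difference in substance.
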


\begin{cor}\label{cor2}
If at least two of the three interpolation problems $\langle
S|_{\emph{h,v}}, T_{2^t}\rangle$, $\langle S|_{\emph{v,d}}, T_{2^t}
\rangle$, and $\langle S|_{\emph{d,h}}, T_{2^t}\rangle$ are almost
regular, then the interpolation problem $\langle S, T_{2^{t+1}}
\rangle$ is almost regular.
\end{cor}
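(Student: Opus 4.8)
The plan is to deduce both corollaries directly from Theorem~\ref{main}, treating the theorem's characterization of almost regularity as a black box and arguing purely combinatorially about the possible triples $(U,V,W)$.

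For Corollary~\ref{cor1}, I would prove the contrapositive: if one of the three restricted systems, say $\langle S|_{\mathrm{hori}}, T_{2^t}\rangle$, fails to be almost regular, then $\langle S, T_{2^{t+1}}\rangle$ is not almost regular. Here one must first reconcile notation: I expect $S|_{\mathrm{hori}}$ to be $S|_{\mathrm{h,v}}\cap S|_{\mathrm{d,h}}$ (the part of $S$ ``visible'' to both the $h$-splittings), with analogous statements for $S|_{\mathrm{vert}}$ and $S|_{\mathrm{diag}}$; this should follow from Definition~\ref{def_hvd}. A nontrivial solution of $(**)$ for $m=2^t$ supported on $S|_{\mathrm{hori}}$ is simultaneously a solution supported in $S|_{\mathrm{h,v}}$ and in $S|_{\mathrm{d,h}}$. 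The task is then to manufacture a forbidden triple $(U,V,W)$ from a single such solution. The natural guess is $U = V = (\text{that solution's support})$ and $W = \emptyset$: then $(U-V)\cup(V-U) = \emptyset = W$, and $(V-W)\cup(W-V) = V = U$, and likewise $(W-U)\cup(U-W) = U = V$, so the symmetric-difference conditions hold; exactly one of the three is empty, so $(\dag)$ holds; and each of the three associated polynomials (two equal, one zero) solves $(**)$ for $m=2^t$. One must check the containments $U\in S|_{\mathrm{h,v}}$, $V\in S|_{\mathrm{v,d}}$, $W\in S|_{\mathrm{d,h}}$ — this is where knowing that $S|_{\mathrm{hori}}$ sits inside both $S|_{\mathrm{h,v}}$ and $S|_{\mathrm{v,d}}$ (not just $S|_{\mathrm{d,h}}$) matters, and it forces me to track carefully which restriction ``hori'' corresponds to. By symmetry of the three conditions in Theorem~\ref{main}, the cases of $S|_{\mathrm{vert}}$ and $S|_{\mathrm{diag}}$ are identical after a cyclic relabeling.

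For Corollary~\ref{cor2}, I would again argue by contrapositive: suppose $\langle S, T_{2^{t+1}}\rangle$ is not almost regular, so a forbidden triple $(U,V,W)$ exists; I will show at least two of the three restricted systems $\langle S|_{\mathrm{h,v}}, T_{2^t}\rangle$, $\langle S|_{\mathrm{v,d}}, T_{2^t}\rangle$, $\langle S|_{\mathrm{d,h}}, T_{2^t}\rangle$ fail. By $(\dag)$ at most one of $U,V,W$ is empty, so at least two are nonempty, say $U$ and $V$ (the other cases are symmetric). Then $\sum_{(i,j)\in U} x^i y^j$ is a nontrivial solution of $(**)$ for $m=2^t$ lying in $S|_{\mathrm{h,v}}$, witnessing that $\langle S|_{\mathrm{h,v}}, T_{2^t}\rangle$ is not almost regular; similarly $V$ shows $\langle S|_{\mathrm{v,d}}, T_{2^t}\rangle$ is not almost regular. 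That is already two failures, so the contrapositive is complete. The only subtlety is nontriviality — a solution with empty support is the zero polynomial and does not witness failure of almost regularity — but that is precisely what $(\dag)$ rules out for the two nonempty members of the triple.

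The main obstacle I anticipate is purely bookkeeping rather than mathematical depth: pinning down the relationship between the ``doubly-restricted'' sets $S|_{\mathrm{hori}}, S|_{\mathrm{vert}}, S|_{\mathrm{diag}}$ appearing in Corollary~\ref{cor1} and the ``singly-restricted'' sets $S|_{\mathrm{h,v}}, S|_{\mathrm{v,d}}, S|_{\mathrm{d,h}}$ of Theorem~\ref{main}, and checking that the trivial triples $(U,U,\emptyset)$, $(\emptyset,V,V)$, $(W,\emptyset,W)$ one builds in the proof of Corollary~\ref{cor1} genuinely satisfy all three membership constraints $U\in S|_{\mathrm{h,v}}$, $V\in S|_{\mathrm{v,d}}$, $W\in S|_{\mathrm{d,h}}$ simultaneously. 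Once Definition~\ref{def_hvd} is unpacked this should be immediate, and no new ideas beyond Theorem~\ref{main} are needed for either corollary.
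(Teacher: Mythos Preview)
Your argument for Corollary~\ref{cor2} is correct and is essentially the paper's own proof: both take the contrapositive, invoke Theorem~\ref{main} to obtain a triple $(U,V,W)$ satisfying $(\dag)$, and use the ``at most one empty'' clause to force nontrivial solutions in at least two of the restricted systems. (Your proposal also sketches Corollary~\ref{cor1}, which is not the statement under review here; note in passing that the paper's triple there is $(U,\emptyset,U)$ rather than $(U,U,\emptyset)$, since $S|_{\mathrm{hori}}\subset S|_{\mathrm{h,v}}\cap S|_{\mathrm{d,h}}$.)
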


For example, if $S=\{(0,0),(0,1),$ $(1,0),(0,2),$ $(1,1),(2,0),$
$(1,2),(2,1),$ $(3,0),(1,3)\}$ and $m=2^2$, then $\langle S, T_{2^2}
\rangle$
\begin{tabular}{|r|r|r|r|}
\hline
0 & 1 & 0 & 0 \\
 \hline
1 & 1 & 0 & 0 \\
\hline
1 & 1 & 1 & 0 \\
\hline
1 & 1 & 1 & 1 \\
\hline
\end{tabular}
(see the preceding sentence of Definition ~\ref{def_hvd}) is almost
regular because $\langle S|_{\text{v,d}}, T_{2}\rangle$
\begin{tabular}{|r|r|}
\hline
0 & 1 \\
 \hline
1 & 1 \\
\hline
\end{tabular} and $\langle S|_{\text{d,h}}, T_{2} \rangle$
\begin{tabular}{|r|r|}
\hline
1 & 0 \\
 \hline
1 & 1 \\
\hline
\end{tabular}
are almost regular.

As an application of Theorem ~\ref{main}, we show that, without the
aid of a computer,  the linear system of plane curves of degree $26$
passing through $10$ general base points with $m_1=m_2=9$,
$m_3=...=m_{10}=8$ is empty. Our future project is to generalize
this to bigger multiplicity cases.

\bigskip

$\emph{Acknowledgements.}$ The author is much indebted to Ivan
Petrakiev, who read these notes carefully and gave helpful comments.
He also would like to thank Rob Lazarsfeld, Carl de Boor, Marcin
Dumnicki, Rick Miranda, Amos Ron, Shayne Waldron, Yuan Xu for
valuable correspondences.


\section{Definitions}

Throughout this note, for each lattice point $(i,j)$, the vector
whose $(a,b)$-th component $((a,b) \in T_{2^t})$ is ${i \choose a}{j
\choose b} (\text{mod } 2)$ will be denoted by $v^t_{i,j}$. This is
nothing but the vector consisting of the coefficients of a column in
an interpolation matrix(c.f. \cite{LL:biv} p.670). We always arrange
$(a,b)$-components with respect to the total degree order, that is,
$(0,0) < (0,1) < (1,0) < (0,2) < (1,1) < (2,0) < (0,3) < \cdots$.
For example,
$$v^0_{0,0}=\left(
\begin{array}{c}
                                      1 \\
                                       \end{array}
                                  \right),$$

$$
v^1_{0,0}= \left(
                                   \begin{array}{c}
                                      1 \\
                                      0 \\
                                      0 \\
                                    \end{array}
                                  \right),
                                  v^1_{0,1}= \left(
                                   \begin{array}{c}
                                      1 \\
                                      1 \\
                                      0 \\
                                    \end{array}
                                  \right),
                                  v^1_{1,0}= \left(
                                   \begin{array}{c}
                                      1 \\
                                      0 \\
                                      1 \\
                                    \end{array}
                                  \right),
                                  v^1_{1,1}= \left(
                                   \begin{array}{c}
                                      1 \\
                                      1 \\
                                      1 \\
                                    \end{array}
                                  \right),
                                  $$

$$
v^2_{0,0}= \left(
                                   \begin{array}{c}
                                      1 \\
                                      0 \\
0 \\
                                      0 \\
                                      0 \\
                                      0 \\
                                      0 \\
                                      0 \\
                                      0 \\
                                      0 \\
                                    \end{array}
                                  \right),
                                  v^2_{1,2}= \left(
                                   \begin{array}{c}
                                      1 \\
                                      0 \\
                                      1 \\
                                      1 \\
                                      0 \\
                                      0 \\
                                      0 \\
                                      1 \\
                                      0 \\
                                      0 \\
                                      \end{array}
                                  \right),
                                    v^2_{3,3}= \left(
                                   \begin{array}{c}
                                      1 \\
                                      1 \\
                                      1 \\
                                      1 \\
                                      1 \\
                                      1 \\
                                      1 \\
                                      1 \\
                                      1 \\
                                      1 \\
                                      \end{array}
                                  \right).
$$

\begin{defn}
We say that $S$ is $2^t$-\emph{independent} if $\{v^t_{i,j} | (i,j)
\in S\}$ are linearly independent. This is equivalent to saying that
$\langle S, T_{2^{t}} \rangle$ is almost regular.
\end{defn}

The problem of deciding whether $S$ is $2^t$-independent can be
reduced modulo $2^t$. Let $B_t := \{(x,y) | x,y=0,1, \cdots, 2^t-1
\}$ and consider the natural projection $\rho : S \longrightarrow
B_t$ defined by $(i,j)\in S  \longmapsto  (x,y)\in B_t$ where $x
\equiv i (\text{mod } 2^t), y \equiv j (\text{mod } 2^t)$.

\begin{lem}\label{mod_two_red}
$S$ is $2^t$-independent if and only if the image $\rho(S)$,
counting multiple elements, is $2^t$-independent.
\end{lem}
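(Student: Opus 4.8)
The plan is to reduce the question of linear independence of the vectors $v^t_{i,j}$ to a computation involving only the residues of $i$ and $j$ modulo $2^t$, by exhibiting an explicit factorization of each column vector. The key observation is Lucas' theorem: for the $(a,b)$-component with $(a,b)\in T_{2^t}$, i.e.\ $a+b\le 2^t-1$, we have $a,b\le 2^t-1$, so $\binom{i}{a}\equiv\binom{i \bmod 2^t}{a}\pmod 2$ and $\binom{j}{b}\equiv\binom{j\bmod 2^t}{b}\pmod 2$, because in base $2$ the digit of $a$ (resp.\ $b$) in positions $\ge t$ is zero, forcing the corresponding binomial-coefficient factors in Lucas' product to be $1$ exactly when the relevant digits of $i$ (resp.\ $j$) play no role. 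First I would record this as the main lemma: $v^t_{i,j}=v^t_{x,y}$ whenever $i\equiv x$ and $j\equiv y$ modulo $2^t$. Thus the multiset $\{v^t_{i,j}\mid (i,j)\in S\}$ equals the multiset $\{v^t_{x,y}\mid (x,y)\in\rho(S)\}$.

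Once that is in place, the lemma is essentially immediate: a multiset of vectors is linearly independent if and only if any equal-as-multisets collection of vectors is linearly independent, and in particular $S$ is $2^t$-independent (i.e.\ the $v^t_{i,j}$, $(i,j)\in S$, are linearly independent) precisely when the $v^t_{x,y}$, $(x,y)\in\rho(S)$ counted with multiplicity, are linearly independent, which is the definition of $\rho(S)$ being $2^t$-independent. One should note here the paper's own remark that if a multiset has a repeated element the system is never almost regular; this is consistent, since a repeated vector is automatically linearly dependent, and $\rho$ can only create or preserve, never destroy, such repetitions. The equivalence with almost regularity of $\langle S,T_{2^t}\rangle$ is then inherited from the displayed equivalence in the preceding Definition.

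The one genuine point to get right — the main obstacle, though it is more bookkeeping than difficulty — is the Lucas-theorem step and the role of the constraint $(a,b)\in T_{2^t}$. It is essential that we only look at components indexed by $T_{2^t}$ rather than by a full box, because then the base-$2$ expansions of $a$ and $b$ occupy only the bottom $t$ binary digits, so only the bottom $t$ digits of $i$ and $j$ can ever be "queried" by a binomial coefficient; the higher digits are matched against a $0$ digit of $a$ or $b$, contributing a factor $\binom{\,\cdot\,}{0}=1$. I would write out Lucas' theorem as $\binom{i}{a}\equiv\prod_{k\ge 0}\binom{i_k}{a_k}\pmod 2$ with $i=\sum i_k2^k$, $a=\sum a_k 2^k$, observe $a_k=0$ for $k\ge t$, and conclude $\binom{i}{a}\equiv\prod_{k=0}^{t-1}\binom{i_k}{a_k}=\binom{i\bmod 2^t}{a}\pmod 2$, and similarly for $j$ and $b$; multiplying gives $\binom{i}{a}\binom{j}{b}\equiv\binom{x}{a}\binom{y}{b}\pmod 2$, which is exactly the assertion $v^t_{i,j}=v^t_{x,y}$ componentwise. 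Everything else is formal.
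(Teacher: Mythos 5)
Your proposal is correct and follows essentially the same route as the paper: the paper's one-line proof invokes exactly the congruence $\binom{u}{z}\equiv\binom{v}{z}\pmod 2$ for $u\equiv v\pmod{2^t}$ and $0\le z\le 2^t-1$, which you derive from Lucas' theorem, and then both arguments conclude by observing that the multisets of column vectors $\{v^t_{i,j}\}$ and $\{v^t_{\rho(i,j)}\}$ coincide. Your write-up is simply a more detailed version of the same idea, making explicit the Lucas computation and the role of the constraint $(a,b)\in T_{2^t}$ that the paper leaves implicit.
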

\begin{proof}
It follows from the fact that if $u \equiv v (\text{mod } 2^t)$ and
$0\leq z \leq 2^t-1$ then ${u \choose z} \equiv {v \choose z}
(\text{mod } 2).$
\end{proof}

Thanks to Lemma ~\ref{mod_two_red}, we can and will assume that $S$
is a subset (possibly counting multiple elements) of $B_t$. Since we
use induction on $t$ in the proof of Theorem ~\ref{main}, we will
sometimes write a pair $(B_t, S)$ in place of $S \subset B_t$ to
avoid confusion. The visualization of $(B_t, S)$ will be used
frequently, for instance,

\bigskip

\noindent$\text{ }\text{ }\text{ }\text{ }\text{ }\text{ }\text{
}\text{ }\text{ }\text{ }\text{ }\text{ }\text{ }\text{ }\text{
}\text{ }\text{ }\text{ }\text{ }\text{ }\text{ }\text{ }\text{
}\text{ }\text{ }\text{ }\text{ }\text{ }\text{ }\text{ }\text{
}\text{ }\text{ }\text{ }\text{ }\text{ }\text{ }\text{ }\text{
}\text{ }\text{ }\text{ }\text{ }\text{ }\text{ }\text{ }\text{
}\text{ }\text{ }\text{ }\text{ }\text{ }\text{ }\text{ }\text{
}\text{ }\text{ }\text{ }\text{ }\text{ }\text{ }\text{ }\text{
}\text{ }\text{ }\text{ }\text{ }\text{ }\text{ }\text{ }\text{
}\text{ }\text{ }\text{ }\text{ }\text{ }\text{ }\text{ }\text{
}\text{ }$
\begin{picture}(50,45) \put(10,10){\line(0,1){40}}
\put(10,10){\line(1,0){40}} \put(50,10){\line(0,1){40}}
\put(10,50){\line(1,0){40}} \put(20,10){\line(0,1){40}}
\put(30,10){\line(0,1){40}}
\put(40,10){\line(0,1){40}}\put(10,20){\line(1,0){40}}
\put(10,30){\line(1,0){40}} \put(10,40){\line(1,0){40}}

\put(12,41){0}\put(22,41){0}\put(32,41){0}\put(42,41){1}
\put(12,31){0}\put(22,31){2}\put(32,31){0}\put(42,31){0}
\put(12,21){0}\put(22,21){0}\put(32,21){0}\put(42,21){0}
\put(12,11){1}\put(22,11){0}\put(32,11){0}\put(42,11){0}

\put(12,03){\tiny{0}}\put(22,03){\tiny{1}}\put(32,03){\tiny{2}}\put(42,03){\tiny{3}}
\put(04,13){\tiny{0}}\put(04,23){\tiny{1}}\put(04,33){\tiny{2}}\put(04,43){\tiny{3}}

\put(-270,25){$S=\{(0,0),(1,2),(1,2),(3,3)\} \subset B_2$
corresponds to } \put(52,25){.}
\end{picture}

\begin{defn}\label{def_hvd}
Suppose that $S \subset B_{t+1}$ does not have multiple points. Then
we define $S|_{\text{h,v}} \subset B_t$ as follows. Given an element
$(i,j) \in B_t$, the element $(i,j)$ belongs to $S|_{\text{h,v}}$ if
and only if one of the four subsets
\begin{equation}\label{hvd_cond}
\aligned
& \{(i,j), (i+2^t,j)\},\\
& \{(i,j+2^t), (i+2^t,j+2^t)\},\\
& \{(i,j), (i,j+2^t)\},\\
& \{(i+2^t,j), (i+2^t,j+2^t)\}
\endaligned
\end{equation} is contained in $S$. If all the four are contained in $S$ then
we call $(i,j)$ a double element. We notice that \emph{h} stands for
horizontal, \emph{v} vertical, and \emph{d} diagonal. Illustrations
are given below.

$\text{ }\text{ }\text{ }\text{ }$\begin{picture}(120,100)
\put(0,0){\line(0,1){80}} \put(0,0){\line(1,0){80}}
\put(80,0){\line(0,1){80}} \put(0,80){\line(1,0){80}}

\put(20,10){\line(1,0){\cellsize}}\put(20,20){\line(1,0){\cellsize}}
\put(20,10){\line(0,1){\cellsize}}\put(30,10){\line(0,1){\cellsize}}

\put(60,10){\line(1,0){\cellsize}}\put(60,20){\line(1,0){\cellsize}}
\put(60,10){\line(0,1){\cellsize}}\put(70,10){\line(0,1){\cellsize}}

\put(20,50){\line(1,0){\cellsize}}\put(20,60){\line(1,0){\cellsize}}
\put(20,50){\line(0,1){\cellsize}}\put(30,50){\line(0,1){\cellsize}}

\put(60,50){\line(1,0){\cellsize}}\put(60,60){\line(1,0){\cellsize}}
\put(60,50){\line(0,1){\cellsize}}\put(70,50){\line(0,1){\cellsize}}

\put(22,-9){$i$} \put(-8,11){$j$}\put(50,-9){$i+2^t$}
\put(-30,51){$j+2^t$} \put(22,11){1} \put(62,11){1} \put(22,51){$*$}
\put(62,51){$*$} \put(92,31){$\text{or }\text{ }\text{ }\text{
}\text{ }$}
\end{picture}  $\text{ }\text{ }\text{ }\text{ }$\begin{picture}(125,100)
\put(0,0){\line(0,1){80}} \put(0,0){\line(1,0){80}}
\put(80,0){\line(0,1){80}} \put(0,80){\line(1,0){80}}

\put(20,10){\line(1,0){\cellsize}}\put(20,20){\line(1,0){\cellsize}}
\put(20,10){\line(0,1){\cellsize}}\put(30,10){\line(0,1){\cellsize}}

\put(60,10){\line(1,0){\cellsize}}\put(60,20){\line(1,0){\cellsize}}
\put(60,10){\line(0,1){\cellsize}}\put(70,10){\line(0,1){\cellsize}}

\put(20,50){\line(1,0){\cellsize}}\put(20,60){\line(1,0){\cellsize}}
\put(20,50){\line(0,1){\cellsize}}\put(30,50){\line(0,1){\cellsize}}

\put(60,50){\line(1,0){\cellsize}}\put(60,60){\line(1,0){\cellsize}}
\put(60,50){\line(0,1){\cellsize}}\put(70,50){\line(0,1){\cellsize}}

\put(22,-9){$i$} \put(-8,11){$j$}\put(50,-9){$i+2^t$}
\put(-30,51){$j+2^t$} \put(22,11){$*$} \put(62,11){$*$}
\put(22,51){1} \put(62,51){1} \put(85,21){\huge$\overset{
\cdot|_{\text{h,v}}}\longrightarrow$}
\end{picture}
\begin{picture}(100,100)
\put(0,0){\line(0,1){\cellsize}} \put(0,10){\line(0,1){\cellsize}}
\put(0,20){\line(0,1){\cellsize}} \put(0,30){\line(0,1){\cellsize}}

\put(0,0){\line(1,0){\cellsize}} \put(10,0){\line(1,0){\cellsize}}
\put(20,0){\line(1,0){\cellsize}} \put(30,0){\line(1,0){\cellsize}}

\put(40,0){\line(0,1){\cellsize}} \put(40,10){\line(0,1){\cellsize}}
\put(40,20){\line(0,1){\cellsize}}
\put(40,30){\line(0,1){\cellsize}} \put(0,40){\line(1,0){\cellsize}}
\put(10,40){\line(1,0){\cellsize}}
\put(20,40){\line(1,0){\cellsize}}
\put(30,40){\line(1,0){\cellsize}}

\put(20,10){\line(1,0){\cellsize}}
\put(20,20){\line(1,0){\cellsize}}
\put(20,10){\line(0,1){\cellsize}}
\put(30,10){\line(0,1){\cellsize}}

\put(22,11){1} \put(22,-9){$i$} \put(-8,11){$j$}
\end{picture}

\bigskip
\bigskip

$\text{ }\text{ }\text{ }\text{ }$\begin{picture}(120,100)
\put(0,0){\line(0,1){80}} \put(0,0){\line(1,0){80}}
\put(80,0){\line(0,1){80}} \put(0,80){\line(1,0){80}}

\put(20,10){\line(1,0){\cellsize}}\put(20,20){\line(1,0){\cellsize}}
\put(20,10){\line(0,1){\cellsize}}\put(30,10){\line(0,1){\cellsize}}

\put(60,10){\line(1,0){\cellsize}}\put(60,20){\line(1,0){\cellsize}}
\put(60,10){\line(0,1){\cellsize}}\put(70,10){\line(0,1){\cellsize}}

\put(20,50){\line(1,0){\cellsize}}\put(20,60){\line(1,0){\cellsize}}
\put(20,50){\line(0,1){\cellsize}}\put(30,50){\line(0,1){\cellsize}}

\put(60,50){\line(1,0){\cellsize}}\put(60,60){\line(1,0){\cellsize}}
\put(60,50){\line(0,1){\cellsize}}\put(70,50){\line(0,1){\cellsize}}

\put(22,-9){$i$} \put(-8,11){$j$}\put(50,-9){$i+2^t$}
\put(-30,51){$j+2^t$} \put(22,11){1} \put(62,11){$*$} \put(22,51){1}
\put(62,51){$*$} \put(92,31){$\text{or }\text{ }\text{ }\text{
}\text{ }$}
\end{picture}  $\text{ }\text{ }\text{ }\text{ }$\begin{picture}(125,100)
\put(0,0){\line(0,1){80}} \put(0,0){\line(1,0){80}}
\put(80,0){\line(0,1){80}} \put(0,80){\line(1,0){80}}

\put(20,10){\line(1,0){\cellsize}}\put(20,20){\line(1,0){\cellsize}}
\put(20,10){\line(0,1){\cellsize}}\put(30,10){\line(0,1){\cellsize}}

\put(60,10){\line(1,0){\cellsize}}\put(60,20){\line(1,0){\cellsize}}
\put(60,10){\line(0,1){\cellsize}}\put(70,10){\line(0,1){\cellsize}}

\put(20,50){\line(1,0){\cellsize}}\put(20,60){\line(1,0){\cellsize}}
\put(20,50){\line(0,1){\cellsize}}\put(30,50){\line(0,1){\cellsize}}

\put(60,50){\line(1,0){\cellsize}}\put(60,60){\line(1,0){\cellsize}}
\put(60,50){\line(0,1){\cellsize}}\put(70,50){\line(0,1){\cellsize}}

\put(22,-9){$i$} \put(-8,11){$j$}\put(50,-9){$i+2^t$}
\put(-30,51){$j+2^t$} \put(22,11){$*$} \put(62,11){1}
\put(22,51){$*$} \put(62,51){1} \put(85,21){\huge$\overset{
\cdot|_{\text{h,v}}}\longrightarrow$}
\end{picture}
\begin{picture}(100,100)
\put(0,0){\line(0,1){\cellsize}} \put(0,10){\line(0,1){\cellsize}}
\put(0,20){\line(0,1){\cellsize}} \put(0,30){\line(0,1){\cellsize}}

\put(0,0){\line(1,0){\cellsize}} \put(10,0){\line(1,0){\cellsize}}
\put(20,0){\line(1,0){\cellsize}} \put(30,0){\line(1,0){\cellsize}}

\put(40,0){\line(0,1){\cellsize}} \put(40,10){\line(0,1){\cellsize}}
\put(40,20){\line(0,1){\cellsize}}
\put(40,30){\line(0,1){\cellsize}} \put(0,40){\line(1,0){\cellsize}}
\put(10,40){\line(1,0){\cellsize}}
\put(20,40){\line(1,0){\cellsize}}
\put(30,40){\line(1,0){\cellsize}}

\put(20,10){\line(1,0){\cellsize}}
\put(20,20){\line(1,0){\cellsize}}
\put(20,10){\line(0,1){\cellsize}}
\put(30,10){\line(0,1){\cellsize}}

\put(22,11){1} \put(22,-9){$i$} \put(-8,11){$j$}
\end{picture}

\bigskip
\bigskip

$\text{ }\text{ }\text{ }\text{ }$\begin{picture}(130,100)
\put(0,0){\line(0,1){80}} \put(0,0){\line(1,0){80}}
\put(80,0){\line(0,1){80}} \put(0,80){\line(1,0){80}}

\put(20,10){\line(1,0){\cellsize}}\put(20,20){\line(1,0){\cellsize}}
\put(20,10){\line(0,1){\cellsize}}\put(30,10){\line(0,1){\cellsize}}

\put(60,10){\line(1,0){\cellsize}}\put(60,20){\line(1,0){\cellsize}}
\put(60,10){\line(0,1){\cellsize}}\put(70,10){\line(0,1){\cellsize}}

\put(20,50){\line(1,0){\cellsize}}\put(20,60){\line(1,0){\cellsize}}
\put(20,50){\line(0,1){\cellsize}}\put(30,50){\line(0,1){\cellsize}}

\put(60,50){\line(1,0){\cellsize}}\put(60,60){\line(1,0){\cellsize}}
\put(60,50){\line(0,1){\cellsize}}\put(70,50){\line(0,1){\cellsize}}

\put(22,-9){$i$} \put(-8,11){$j$} \put(50,-9){$i+2^t$}
\put(-30,51){$j+2^t$} \put(22,11){$1$} \put(62,11){$1$}
\put(22,51){1} \put(62,51){1} \put(92,21){\huge$\overset{
\cdot|_{\text{h,v}}}\longrightarrow$}
\end{picture}
\begin{picture}(100,100)
\put(0,0){\line(0,1){\cellsize}} \put(0,10){\line(0,1){\cellsize}}
\put(0,20){\line(0,1){\cellsize}} \put(0,30){\line(0,1){\cellsize}}

\put(0,0){\line(1,0){\cellsize}} \put(10,0){\line(1,0){\cellsize}}
\put(20,0){\line(1,0){\cellsize}} \put(30,0){\line(1,0){\cellsize}}

\put(40,0){\line(0,1){\cellsize}} \put(40,10){\line(0,1){\cellsize}}
\put(40,20){\line(0,1){\cellsize}}\put(40,30){\line(0,1){\cellsize}}
\put(0,40){\line(1,0){\cellsize}} \put(10,40){\line(1,0){\cellsize}}
\put(20,40){\line(1,0){\cellsize}}\put(30,40){\line(1,0){\cellsize}}

\put(20,10){\line(1,0){\cellsize}}
\put(20,20){\line(1,0){\cellsize}}
\put(20,10){\line(0,1){\cellsize}}
\put(30,10){\line(0,1){\cellsize}}

\put(22,11){2} \put(22,-9){$i$} \put(-8,11){$j$}
\end{picture}

\bigskip
\bigskip

$S|_{\text{v,d}} \subset B_t$ (resp. $S|_{\text{d,h}} \subset B_t$)
can be similarly defined. In the above definition we replace
$(~\ref{hvd_cond})$ by
$$
\aligned
& \{(i,j), (i,j+2^t)\},\{(i+2^t,j), (i+2^t,j+2^t)\},\\
&\{(i,j), (i+2^t,j+2^t)\},\{(i,j+2^t),(i+2^t,j)\} \\
&\text{ }\text{ }( \text{ }\text{
}\text{resp. } \{(i,j), (i+2^t,j+2^t)\},\{(i,j+2^t),(i+2^t,j)\},\\
& \text{ }\text{ }\text{ }\text{ }\text{ }\text{ }\text{ }\text{
}\text{ }\text{ }\text{ }\text{ }\text{ }\{(i,j), (i+2^t,j)\},
\{(i,j+2^t), (i+2^t,j+2^t)\}\text{ }\text{ }).
\endaligned
$$

To define $ S|_{\text{hori}}$, $S|_{\text{vert}}$,
$S|_{\text{diag}}$ respectively, we replace $(~\ref{hvd_cond})$ by
$$ \aligned
&\big( \{(i,j), (i+2^t,j)\}, \{(i,j+2^t), (i+2^t,j+2^t)\} \big),\\
&\big( \{(i,j), (i,j+2^t)\},\{(i+2^t,j), (i+2^t,j+2^t)\} \big),\\
&\big( \{(i,j), (i+2^t,j+2^t)\},\{(i,j+2^t),(i+2^t,j)\} \big)
\endaligned
$$respectively.

\end{defn}


             \section{Proofs}

\begin{proof}[Proof of Theorem ~\ref{main}]
$\text{ }$

\smallskip

\noindent$(i) \Longrightarrow  (ii): $

Before giving a proof, we introduce some notations. Let $V^t$ be the
${{2^{t}+1}\choose {2}}$-dimensional vector space over
$\mathbb{F}_2$ consisting of $(a,b)$-components $((a,b) \in
T_{2^{t}})$, so that $v^t_{i,j} \in V^{t}$. We decompose $V^{t+1}$
into $V^t_w \oplus V^t_y \oplus V^t_z$, where $V^t_w$ consists of
$(a,b)$-components with $(a,b) \in B_t$, $V^t_y$ with $b\geq 2^t$,
and $V^t_z$ with $a \geq 2^t$. As a matter of fact, we consider the
following isomorphism of vector spaces:
$$
\tau : V^{t+1} \simeq V^t_w \oplus V^t_y \oplus V^t_z$$

$$
\left(
                                  \begin{array}{c}
(0,0)\text{-th component of }v\\
(0,1)\text{-th component of }v\\
(1,0)\text{-th component of }v\\
(0,2)\text{-th component of }v\\
(1,1)\text{-th component of }v\\
(2,0)\text{-th component of }v\\
\vdots\\
\vdots\\
\vdots\\
\vdots\\
\vdots\\
\vdots\\
\vdots\\
\vdots\\
(2^{t+1}-3,2)\text{-th}\\
(2^{t+1}-2,1)\text{-th}\\
(2^{t+1}-1,0)\text{-th}\\
\end{array}
\right)=v \overset{\tau}\mapsto \begin{array}{c}
w\\
\oplus\\
y\\
\oplus\\
z\\
\end{array}=\begin{array}{c}
\left(
                                  \begin{array}{c}
(0,0)\text{-th}\\
(0,1)\text{-th}\\
(1,0)\text{-th}\\
\vdots\\
(2^t-2,2^t-1)\text{-th}\\
(2^t-1,2^t-2)\text{-th}\\
(2^t-1,2^t-1)\text{-th}\\
                                     \end{array}
                                 \right)\\
\oplus\\
\left(
                                  \begin{array}{c}
(0,2^t)\text{-th}\\
(0,2^t+1)\text{-th}\\
(1,2^t)\text{-th}\\
\vdots\\
(2^{t}-1,2^t)\text{-th}\\
                                     \end{array}
                                 \right)\\
                                 \oplus\\
\left(
                                  \begin{array}{c}
(2^t,0)\text{-th}\\
(2^t,1)\text{-th}\\
(2^t+1,0)\text{-th}\\
\vdots\\
(2^{t+1}-1,0)\text{-th}\\
                                     \end{array}
                                 \right),
                                 \end{array}
$$ where we have only rearranged the order of components. Note that
$\text{dim}(V^t_w) = (2^t)^2$, $\text{dim}(V^t_y)=\text{dim}(V^t_z)$
$={{2^{t}+1}\choose {2}}$ and that $V^{t}_y$, $V^{t}_z$, and $V^t$
are isomorphic as vector spaces.

\bigskip

Seeking contradiction, suppose that there is a triple $(U,V,W)
\subset S|_{\text{h,v}}$, $S|_{\text{v,d}}$, $S|_{\text{d,h}}$ of
subsets satisfying the three conditions in $(\dag)$ in the statement
of Theorem ~\ref{main}. Without loss of generality, we assume that
$U$ and $V$ are nonempty.

 By using the proof of Lemma ~\ref{mod_two_red},
we observe that

\medskip

\begin{picture}(30,35)
\put(25,19){$(a,b)$-th component of} \put(0,4){$v^{t+1}_{i,j}+
v^{t+1}_{i+2^t,j}\text{ }( \text{or } v^{t+1}_{i,j+2^t}+
v^{t+1}_{i+2^t,j+2^t})$}

\put(210,21){0} \put(280,21){if $a < 2^t,$} \put(210,01){${i \choose
{a-2^t}} {j \choose b}$} \put(280,01){if $a \geq 2^t$,}

\put(185,11){=\text{ }\Big{\{}}
\end{picture}

\medskip

and

\medskip

\begin{picture}(30,35)
\put(25,19){$(a,b)$-th component of} \put(0,4){$v^{t+1}_{i,j}+
v^{t+1}_{i,j+2^t}\text{ }( \text{or } v^{t+1}_{i+2^t,j}+
v^{t+1}_{i+2^t,j+2^t})$}

\put(210,21){0} \put(280,21){if $b < 2^t,$} \put(210,01){${i \choose
{a}} {j \choose {b-2^t}}$} \put(280,01){if $b \geq 2^t$.}

\put(185,11){=\text{ }\Big{\{}}
\end{picture}

\medskip

\noindent Then \begin{equation}\label{vecsumh}\tau(v^{t+1}_{i,j}+
v^{t+1}_{i+2^t,j}) \text{ }( \text{or } v^{t+1}_{i,j+2^t}+
v^{t+1}_{i+2^t,j+2^t})=\vec{0} \oplus \vec{0} \oplus v^t_{i,j}\in
V^t_w \oplus V^t_y \oplus V^t_z,\end{equation} and
\begin{equation}\label{vecsumv}\tau(v^{t+1}_{i,j}+ v^{t+1}_{i,j+2^t})\text{ }( \text{or }
v^{t+1}_{i+2^t,j}+ v^{t+1}_{i+2^t,j+2^t}) = \vec{0} \oplus v^t_{i,j}
\oplus \vec{0} \in V^t_w \oplus V^t_y \oplus V^t_z.\end{equation}

\begin{lem}\label{lastvecsum}
Suppose that there is a triple $(U,V,W) \subset S|_{\text{h,v}}$,
$S|_{\text{v,d}}$, $S|_{\text{d,h}}$ of subsets satisfying the three
conditions in $(\dag)$ in the statement of Theorem ~\ref{main}. Then
we have
$$\aligned&\sum_{(i,j)\in U-V} \tau(v^{t+1}_{i,j}+
v^{t+1}_{i+2^t,j})\text{ } ( \emph{or }
v^{t+1}_{i,j+2^t}+ v^{t+1}_{i+2^t,j+2^t})\\
&\text{ }\text{ }\text{ }\text{ }\text{ }+ \sum_{(i,j)\in U\cap V}
\tau(v^{t+1}_{i,j}+ v^{t+1}_{i,j+2^t})\text{ }( \emph{or }
v^{t+1}_{i+2^t,j}+
v^{t+1}_{i+2^t,j+2^t}) \\
&\text{ }\text{ }\text{ }\text{ }\text{ }+ \sum_{(i,j)\in V-U}
\tau(v^{t+1}_{i,j}+ v^{t+1}_{i+2^t,j+2^t})\text{ } ( \emph{or }
v^{t+1}_{i,j+2^t}+ v^{t+1}_{i+2^t,j})\\
& = \vec{0}.\endaligned $$
\end{lem}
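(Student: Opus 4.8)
\textbf{Proof proposal for Lemma~\ref{lastvecsum}.}

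The plan is to show that the displayed sum is a telescoping-style cancellation driven by the three set-theoretic identities in $(\dag)$. First I would use formulas \eqref{vecsumh} and \eqref{vecsumv} to rewrite each of the three sums purely in terms of vectors living in a single summand of $V^t_w \oplus V^t_y \oplus V^t_z$. By \eqref{vecsumh}, each term $\tau(v^{t+1}_{i,j}+v^{t+1}_{i+2^t,j})$ (the ``$S|_{\text{h,v}}$-type'' difference, coming from the first sum over $U-V$) equals $\vec 0 \oplus \vec 0 \oplus v^t_{i,j}$; by \eqref{vecsumv}, each term $\tau(v^{t+1}_{i,j}+v^{t+1}_{i,j+2^t})$ (the ``$S|_{\text{v,d}}$-type'' difference, the middle sum over $U\cap V$) equals $\vec 0 \oplus v^t_{i,j} \oplus \vec 0$. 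For the third sum, over $V-U$, the term is a ``$S|_{\text{d,h}}$-type'' difference $\tau(v^{t+1}_{i,j}+v^{t+1}_{i+2^t,j+2^t})$; here I would observe that $v^{t+1}_{i,j}+v^{t+1}_{i+2^t,j+2^t}=(v^{t+1}_{i,j}+v^{t+1}_{i+2^t,j})+(v^{t+1}_{i+2^t,j}+v^{t+1}_{i+2^t,j+2^t})$, so by \eqref{vecsumh} and \eqref{vecsumv} its image under $\tau$ is $\vec 0 \oplus v^t_{i,j} \oplus v^t_{i,j}$. (One must check that the ``or'' alternatives in \eqref{vecsumh}, \eqref{vecsumv} give the same value of $v^t_{i,j}$, which they do since both alternatives are computed from the same binomial coefficients ${i\choose a}{j\choose b}$, and similarly for the decompositions of the $S|_{\text{d,h}}$-type term into its alternatives.)

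With these substitutions the left-hand side becomes, in the $V^t_w \oplus V^t_y \oplus V^t_z$ decomposition,
$$\vec 0 \;\oplus\; \Big(\sum_{(i,j)\in U\cap V} v^t_{i,j} + \sum_{(i,j)\in V-U} v^t_{i,j}\Big) \;\oplus\; \Big(\sum_{(i,j)\in U-V} v^t_{i,j} + \sum_{(i,j)\in V-U} v^t_{i,j}\Big).$$
The $V^t_w$-component is manifestly $\vec 0$. In the $V^t_y$-component, $(U\cap V)$ and $(V-U)$ are disjoint with union $V$, so the sum collapses to $\sum_{(i,j)\in V} v^t_{i,j}$. In the $V^t_z$-component, $(U-V)$ and $(V-U)$ are disjoint with union $(U-V)\cup(V-U)$, which by the first identity in $(\dag)$ equals $W$; so that sum collapses to $\sum_{(i,j)\in W} v^t_{i,j}$. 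Now the hypothesis $(\dag)$ says that $\sum_{(i,j)\in V} x^i y^j$ and $\sum_{(i,j)\in W} x^i y^j$ are both solutions of $(**)$ for $m=2^t$, which by the discussion of the interpolation matrix means exactly that $\sum_{(i,j)\in V} v^t_{i,j}=\vec 0$ in $V^t$ and $\sum_{(i,j)\in W} v^t_{i,j}=\vec 0$ in $V^t$; transporting these across the isomorphisms $V^t \simeq V^t_y$ and $V^t \simeq V^t_z$, both remaining components vanish, giving $\vec 0$ as claimed.

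The main obstacle, and the only place requiring genuine care rather than bookkeeping, is the middle step: correctly pinning down which of the two ``or'' conventions applies to each $(i,j)$ as a function of which of the four pairs in \eqref{hvd_cond} (or its $S|_{\text{v,d}}$, $S|_{\text{d,h}}$ analogues) actually witnesses membership of $(i,j)$ in $U$, $V$, $W$, and checking that the resulting $\tau$-image is independent of that choice. I would handle this by a short case analysis over the (at most four) witnessing pairs, using that any two such differences for the same $(i,j)$ differ by a vector of the form $v^{t+1}_{i,j}+v^{t+1}_{i',j'}$ with both $v$'s already accounted for, so that the ambiguity never affects the relevant component; alternatively, one notes directly that \eqref{vecsumh} and \eqref{vecsumv} show the $\tau$-image depends only on $(i,j)$ and the ``type'' (h/v versus v/d versus d/h), not on the particular pair. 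Once that is settled, the disjointness facts and the identity $(U-V)\cup(V-U)=W$ from $(\dag)$ finish the proof immediately.
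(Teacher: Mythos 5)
Your proof is correct and follows essentially the same route as the paper's: rewrite the three sums via \eqref{vecsumh}, \eqref{vecsumv}, and (for the diagonal terms) their sum, collect by component in $V^t_w\oplus V^t_y\oplus V^t_z$, and observe that the $V^t_y$ and $V^t_z$ components are $\sum_{V} v^t_{i,j}$ and $\sum_{W} v^t_{i,j}$, both zero because $V,W$ are solutions of $(**)$. Your extra step of deriving the $\vec 0\oplus v^t_{i,j}\oplus v^t_{i,j}$ formula for diagonal differences by splitting $v^{t+1}_{i,j}+v^{t+1}_{i+2^t,j+2^t}$ into a horizontal plus a vertical difference is exactly the ``same argument'' the paper alludes to, just made explicit, and your remark that the ``or'' alternatives give identical $\tau$-images is a reasonable sanity check already built into \eqref{vecsumh}--\eqref{vecsumv}.
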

\begin{proof}[Proof of Lemma ~\ref{lastvecsum}]
By $(~\ref{vecsumh})$ and $(~\ref{vecsumv})$, we have
\begin{equation}\label{vecsumhv}
\aligned&\sum_{(i,j)\in U-V} \tau(v^{t+1}_{i,j}+
v^{t+1}_{i+2^t,j})\text{ } ( \text{or }
v^{t+1}_{i,j+2^t}+ v^{t+1}_{i+2^t,j+2^t})\\
&\text{ }\text{ }\text{ }\text{ }\text{ }+ \sum_{(i,j)\in U\cap V}
\tau(v^{t+1}_{i,j}+ v^{t+1}_{i,j+2^t})\text{ }( \text{or }
v^{t+1}_{i+2^t,j}+
v^{t+1}_{i+2^t,j+2^t}) \\
& = \sum_{(i,j)\in U-V} \vec{0} \oplus \vec{0} \oplus
v^t_{i,j}\\
&\text{ }\text{ }\text{ }\text{ }\text{ } + \sum_{(i,j)\in U\cap V}
\vec{0} \oplus v^t_{i,j} \oplus \vec{0}.\endaligned
\end{equation}

Applying the same argument as in $(~\ref{vecsumh})$ or
$(~\ref{vecsumv})$ for $(B_t, S|_{\text{v,d}})$, we get
\begin{equation}\label{vecsumvd}
\aligned&\sum_{(i,j)\in V-U} \tau(v^{t+1}_{i,j}+
v^{t+1}_{i+2^t,j+2^t})\text{ } ( \text{or }
v^{t+1}_{i,j+2^t}+ v^{t+1}_{i+2^t,j})\\
& = \sum_{(i,j)\in V-U} \vec{0} \oplus v^t_{i,j} \oplus
v^t_{i,j}.\endaligned
\end{equation}

On the other hand, since $(B_t, U)\subset (B_t, S|_{\text{h,v}})$,
$(B_t, V)\subset (B_t, S|_{\text{v,d}})$ and $(B_t, W)\subset (B_t,
S|_{\text{d,h}})$ satisfy the first condition in $(\dag)$, we have
$\sum_{(i,j)\in U} v^t_{i,j}=\vec{0}$, $\sum_{(i,j)\in V}
v^t_{i,j}=\vec{0}$ and $\sum_{(i,j)\in W} v^t_{i,j}=\vec{0}$.

 Adding $(~\ref{vecsumhv})$
and $(~\ref{vecsumvd})$ together gives
$$\aligned&\sum_{(i,j)\in U-V} \tau(v^{t+1}_{i,j}+
v^{t+1}_{i+2^t,j})\text{ } ( \text{or }
v^{t+1}_{i,j+2^t}+ v^{t+1}_{i+2^t,j+2^t})\\
&\text{ }\text{ }\text{ }\text{ }\text{ }+ \sum_{(i,j)\in U\cap V}
\tau(v^{t+1}_{i,j}+ v^{t+1}_{i,j+2^t})\text{ }( \text{or }
v^{t+1}_{i+2^t,j}+
v^{t+1}_{i+2^t,j+2^t}) \\
&\text{ }\text{ }\text{ }\text{ }\text{ }+ \sum_{(i,j)\in V-U}
\tau(v^{t+1}_{i,j}+ v^{t+1}_{i+2^t,j+2^t})\text{ } ( \text{or }
v^{t+1}_{i,j+2^t}+ v^{t+1}_{i+2^t,j})\\
& = \sum_{(i,j)\in U-V} \vec{0} \oplus \vec{0} \oplus v^t_{i,j}+
\sum_{(i,j)\in U\cap V} \vec{0} \oplus v^t_{i,j} \oplus \vec{0} +
\sum_{(i,j)\in V-U}
\vec{0} \oplus v^t_{i,j} \oplus v^t_{i,j}\\
& = \sum_{(i,j)\in V} \vec{0} \oplus v^t_{i,j} \oplus \vec{0} +
\sum_{(i,j)\in
(U-V)\cup(V-U)} \vec{0} \oplus \vec{0} \oplus v^t_{i,j}\\
& = \sum_{(i,j)\in V} \vec{0} \oplus v^t_{i,j} \oplus \vec{0} + \sum_{(i,j)\in W} \vec{0} \oplus \vec{0} \oplus v^t_{i,j}\\
& = \vec{0}.\endaligned $$ \end{proof}

We observe that $U-V\subset U\subset S|_{\text{h,v}}$ and $U-V
\subset W \subset S|_{\text{d,h}}$ imply $U-V \subset
S|_{\text{hori}}$. So, by Definition ~\ref{def_hvd},  if $(i,j)\in U
- V$ then either $\{(i,j),(i+2^t,j)\}$ or $\{(i,j+2^t), (i+2^t,
j+2^t)\}$ is contained in $S$. In the same manner, $V-U \subset
S|_{\text{diag}}$. If $(i,j)\in V-U$ then either $\{(i,j),
(i+2^t,j+2^t)\}$ or $\{(i,j+2^t), (i+2^t,j)\}$  is contained in $S$.
It is obvious that if $(i,j)\in U\cap V\subset S|_{\text{h,v}}\cap
S|_{\text{v,d}}$ then either $\{(i,j), (i,j+2^t)\}$ or $\{(i+2^t,j),
(i+2^t,j+2^t)\}$ is contained in $S$.

So Lemma ~\ref{lastvecsum} implies that there is a nonempty subset
$S'$ of $S$ such that $\sum_{(i,j)\in S'} v^{t+1}_{i,j}=\vec{0}$.
Therefore $(B_{t+1}, S)$ is not $2^{t+1}$-independent.

\bigskip
\bigskip
\bigskip
\medskip

\noindent$(i) \Longleftarrow  (ii): $

Suppose that $(B_{t+1}, S)$ is not $2^{t+1}$-independent. Then there
is  a nonempty minimal subset $S'$ of $S$ such that $\sum_{(i,j)\in
S'} v^{t+1}_{i,j}=\vec{0}$.

\begin{lem}\label{even}
$|S' \cap \{(i,j), (i,j+2^t), (i+2^t,j), (i+2^t,j+2^t)\}| \equiv 0
\text{ } (\emph{mod } 2)$ for any $(i,j) \in B_t$.
\end{lem}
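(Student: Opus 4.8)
The plan is to exploit the minimality of $S'$ together with the block structure of the vectors $v^{t+1}_{i,j}$ recorded by the isomorphism $\tau$. Fix $(i,j) \in B_t$ and write $Q = \{(i,j), (i,j+2^t), (i+2^t,j), (i+2^t,j+2^t)\}$ for the four-point fiber over $(i,j)$ under the projection $B_{t+1} \to B_t$. Suppose for contradiction that $|S' \cap Q|$ is odd. Since $\sum_{(p,q) \in S'} v^{t+1}_{p,q} = \vec 0$, projecting this relation onto the summand $V^t_w$ (the $(a,b)$-components with $a,b < 2^t$) via $\tau$ will single out the fiber $Q$: for a point $(p,q) \in B_{t+1}$, its image $v^{t+1}_{p,q}$ has $w$-component equal to $v^t_{p \bmod 2^t,\, q \bmod 2^t}$, because $\binom{p}{a}\binom{q}{b} \equiv \binom{p \bmod 2^t}{a}\binom{q \bmod 2^t}{b} \pmod 2$ for $a,b < 2^t$ (this is exactly the fact used in the proof of Lemma \ref{mod_two_red}). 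Hence all four points of $Q$ have the \emph{same} $w$-component, namely $v^t_{i,j}$, while points in different fibers have $w$-components indexed by distinct elements of $B_t$.

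The key step is then a parity/linear-independence argument on the $w$-block. Grouping the relation $\sum_{(p,q)\in S'} v^{t+1}_{p,q} = \vec 0$ fiber by fiber and reading off the $V^t_w$-component gives
$$
\sum_{(i,j) \in B_t} \bigl(|S' \cap Q_{i,j}| \bmod 2\bigr)\, v^t_{i,j} = \vec 0,
$$
where $Q_{i,j}$ denotes the fiber over $(i,j)$. Now I would like to conclude that every coefficient $|S' \cap Q_{i,j}| \bmod 2$ vanishes. The honest way to do this without circularity is to invoke minimality directly rather than independence of the $v^t_{i,j}$: let $S'' = \{(p,q) \in S' : |S' \cap Q_{p \bmod 2^t,\, q \bmod 2^t}| \text{ is even}\}$ be the union of the even fibers of $S'$. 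If some fiber of $S'$ is odd, then $S''$ is a \emph{proper} subset of $S'$ (the odd fiber is not contained in $S''$), and it is nonempty precisely when... — here one must be slightly careful, so instead I would argue as follows. Pair up, inside each even fiber, the points of $S' \cap Q_{i,j}$ arbitrarily two at a time; for each such pair $\{(p,q),(p',q')\}$ with the same image in $B_t$ we have $v^{t+1}_{p,q} + v^{t+1}_{p',q'}$ lying in $V^t_y \oplus V^t_z$ by \eqref{vecsumh}–\eqref{vecsumv} (its $w$-component cancels). Summing the original relation and subtracting these paired sums, we are left with $\sum v^{t+1}_{p,q} = (\text{vector in } V^t_y \oplus V^t_z)$ over the points lying in odd fibers, one representative per odd fiber; taking the $w$-component, $\sum_{\text{odd fibers}} v^t_{i,j} = \vec 0$, a nontrivial relation among the $v^t_{i,j}$ with one term per odd fiber.

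To kill that relation I use minimality of $S'$ once more: if there is at least one odd fiber, pick a point $(p_0,q_0) \in S'$ in some odd fiber and consider $S' \setminus \{(p_0,q_0)\}$. The remaining relation in $V^{t+1}$ must still be analyzed — this is where the main obstacle lies: a single relation among the $v^t_{i,j}$ does not by itself produce a subrelation of $S'$, because the $v^{t+1}$ vectors also carry $y$- and $z$-components that need not cancel. The clean resolution, and the step I expect to be delicate, is to note that the three summands $V^t_w$, $V^t_y$, $V^t_z$ are \emph{independent coordinates}, so the vanishing relation over $S'$ forces separate vanishing in each block; the $V^t_w$-block vanishing is precisely $\sum_{(i,j)} (|S'\cap Q_{i,j}| \bmod 2)\, v^t_{i,j} = \vec 0$, and the real content is that this, combined with the $y$- and $z$-block relations and the minimality of $S'$, is inconsistent with any fiber being odd — intuitively, an odd fiber contributes a point whose $w$-component cannot be cancelled by reshuffling within $S'$ without removing that point, contradicting that $S'$ is \emph{minimal} with vanishing sum (removing the point and re-pairing would yield a strictly smaller vanishing subset, unless the point was never needed, i.e. the fiber was even). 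I would organize the write-up so that this last inconsistency is the single clean lemma, with \eqref{vecsumh} and \eqref{vecsumv} supplying the block decomposition and Lemma \ref{mod_two_red}'s binomial fact supplying the fiber-collapsing identity; everything else is bookkeeping on the partition of $S'$ into fibers.
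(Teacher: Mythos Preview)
Your plan has the right opening move --- project the vanishing relation $\sum_{(p,q)\in S'} v^{t+1}_{p,q}=\vec 0$ onto the block $V^t_w$ and use that the four points of a fiber $Q_{i,j}$ share the same $w$-component --- but it goes astray immediately afterward because you misidentify what that $w$-component actually is. The space $V^t_w$ is indexed by $(a,b)\in B_t$ (all $0\le a,b<2^t$) and has dimension $(2^t)^2$, whereas $V^t$ is indexed by $(a,b)\in T_{2^t}$ (only $a+b<2^t$) and has dimension $\binom{2^t+1}{2}$. The $w$-projection of $v^{t+1}_{p,q}$ is the vector $w_{i,j}:=\tau_1(v^{t+1}_{p,q})$ with $(i,j)=(p\bmod 2^t,q\bmod 2^t)$, \emph{not} $v^t_{i,j}$; the latter is merely a truncation of the former. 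This matters because the $(2^t)^2$ vectors $\{w_{i,j}:(i,j)\in B_t\}$ \emph{are} linearly independent --- arranging them in total-degree order gives an upper-triangular matrix with $1$'s on the diagonal --- so the relation $\sum_{(i,j)\in B_t}(|S'\cap Q_{i,j}|\bmod 2)\,w_{i,j}=\vec 0$ forces every coefficient to vanish and the lemma is finished. There is no circularity: this is an elementary fact about the $w_{i,j}$, independent of anything being proved inductively.

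Your attempt to substitute a minimality argument for this independence step does not work and cannot be made to work along the lines you sketch. Minimality of $S'$ plays no role whatsoever in the lemma (the paper's proof uses only the vanishing sum). Removing a single point $(p_0,q_0)$ from an odd fiber does not produce a smaller vanishing subset --- the sum over $S'\setminus\{(p_0,q_0)\}$ equals $v^{t+1}_{p_0,q_0}\ne\vec 0$ --- so the ``reshuffling'' you describe never yields a contradiction with minimality. The entire detour through pairing, leftover points, and $y$/$z$-block relations is unnecessary once you keep the full $B_t$-indexed $w$-block rather than collapsing to the $T_{2^t}$-indexed vectors $v^t_{i,j}$.
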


To prove this lemma, we need the following claims. Let $w_{i,j} =
\tau_1 (v^{t+1}_{i,j})$ where $\tau_1$ is the natural projection
$V^{t+1}=V^t_w \oplus V^t_y \oplus V^t_z \longrightarrow V^t_w$.

\begin{claim}\label{lin_ind}
$w_{i,j}$ $( (i,j)\in B_t)$ are linearly independent. \end{claim}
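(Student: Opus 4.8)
The plan is to recognize the family $\{w_{i,j}\}_{(i,j)\in B_t}$ as the columns of an explicit square matrix over $\mathbb{F}_2$ and to show that matrix is invertible; since $|B_t|=(2^t)^2=\dim V^t_w$, linear independence of the $w_{i,j}$ is equivalent to invertibility. By definition the $(a,b)$-component of $w_{i,j}=\tau_1(v^{t+1}_{i,j})$ is $\binom{i}{a}\binom{j}{b}\pmod 2$, with $(a,b)$ ranging over $B_t$. Hence the matrix $M$ whose columns are the $w_{i,j}$ (rows indexed by $(a,b)\in B_t$, columns by $(i,j)\in B_t$) is the Kronecker square $M=A\otimes A$, where $A$ is the $2^t\times 2^t$ matrix with $A_{a,i}=\binom{i}{a}\pmod 2$ for $a,i\in\{0,1,\dots,2^t-1\}$.

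First I would check that $A$ is invertible over $\mathbb{F}_2$. This is immediate from the triangular structure of binomial coefficients: $\binom{i}{a}=0$ whenever $a>i$ and $\binom{i}{i}=1$, so $A$ is triangular with $1$'s on the diagonal and $\det A=1$. Then $M=A\otimes A$ is invertible as well — either via $\det(A\otimes A)=(\det A)^{2^{t+1}}$, which equals $1$ in $\mathbb{F}_2$, or directly: ordering $B_t$ by $(i,j)\prec(i',j')$ iff $i<i'$, or ($i=i'$ and $j<j'$), one sees from $\binom{i}{a}\binom{j}{b}=0$ for $(a,b)\succ(i,j)$ and $\binom{i}{i}\binom{j}{j}=1$ that $M$ is itself triangular with unit diagonal. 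In either case $M$ is invertible, so its columns $w_{i,j}$ are linearly independent, which is the claim.

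There is essentially no hard step here; the only point requiring a little care is that $\tau_1$ restricts to the \emph{square} index set $B_t$ rather than to a triangle, so that the number of vectors $w_{i,j}$ matches $\dim V^t_w$ and the Kronecker/triangular structure is clean. It is worth noting in passing that, because $(i,j)\in B_t$ forces $\binom{i}{a}=0$ for $a\geq 2^t$ and $\binom{j}{b}=0$ for $b\geq 2^t$, the projection $\tau_1$ actually loses no information: $v^{t+1}_{i,j}=w_{i,j}\oplus\vec 0\oplus\vec 0$ for every $(i,j)\in B_t$, a fact that can be reused in the subsequent claims and in the proof of Lemma~\ref{even}.
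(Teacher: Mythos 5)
Your proof is correct and, via the lexicographic-ordering argument, takes essentially the same route as the paper, which observes that the matrix of the $w_{i,j}$ is upper triangular with unit diagonal when indexed by the total degree order; either ordering works because both linearly extend the product partial order governing the vanishing of $\binom{i}{a}\binom{j}{b}$. The Kronecker decomposition $M=A\otimes A$ is a pleasant equivalent repackaging of the same triangularity fact, and your closing remark that $v^{t+1}_{i,j}=w_{i,j}\oplus\vec 0\oplus\vec 0$ for $(i,j)\in B_t$ is a correct and potentially useful aside.
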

\begin{proof}
It is easy to see that the matrix in which columns $w_{i,j}$ are
arranged with respect to the total degree order is an upper triangle
matrix with diagonal $(1,1, ... ,1)$.
\end{proof}

\begin{claim}\label{identical}
For any $(i,j) \in B_t$, the vectors $w_{i,j}, w_{i,j+2^t},
w_{i+2^t,j}, w_{i+2^t,j+2^t} $ are identical. \end{claim}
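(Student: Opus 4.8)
\textbf{Proof proposal for Claim~\ref{identical}.}
The plan is to prove the claim by a one-line computation with binomial coefficients modulo $2$, reusing the divisibility fact already exploited in the proof of Lemma~\ref{mod_two_red}. Recall that $V^t_w$ is the subspace of $V^{t+1}$ spanned by the $(a,b)$-components with $(a,b)\in B_t$; hence $\tau_1$ simply discards all components indexed by $(a,b)$ with $a\geq 2^t$ or $b\geq 2^t$, and the $(a,b)$-component of $w_{i,j}=\tau_1(v^{t+1}_{i,j})$ is ${i\choose a}{j\choose b}\ (\mathrm{mod}\ 2)$ for $(a,b)\in B_t$, i.e.\ for $0\leq a,b\leq 2^t-1$.

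First I would record the trivial bookkeeping that, since $(i,j)\in B_t$, we have $0\leq i,j\leq 2^t-1$, so all four of $i,j,i+2^t,j+2^t$ lie in $\{0,\dots,2^{t+1}-1\}$ and the four vectors in question are genuinely among the columns of the interpolation matrix for $(B_{t+1},\cdot)$; moreover $i\equiv i+2^t$ and $j\equiv j+2^t$ modulo $2^t$. Then, for any $0\leq a\leq 2^t-1$, the fact quoted in the proof of Lemma~\ref{mod_two_red}---if $u\equiv v\ (\mathrm{mod}\ 2^t)$ and $0\leq z\leq 2^t-1$ then ${u\choose z}\equiv {v\choose z}\ (\mathrm{mod}\ 2)$---gives ${i+2^t\choose a}\equiv {i\choose a}\ (\mathrm{mod}\ 2)$, and likewise ${j+2^t\choose b}\equiv {j\choose b}\ (\mathrm{mod}\ 2)$ for $0\leq b\leq 2^t-1$. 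Multiplying these, the $(a,b)$-component of each of $w_{i,j}$, $w_{i,j+2^t}$, $w_{i+2^t,j}$, $w_{i+2^t,j+2^t}$ equals ${i\choose a}{j\choose b}\ (\mathrm{mod}\ 2)$ for every $(a,b)\in B_t$, so the four vectors coincide. (Equivalently, via Lucas' theorem: writing $i,j<2^t$ in binary, passing from $i$ to $i+2^t$ and from $j$ to $j+2^t$ merely switches on the bit in position $t$, which is invisible to any $a,b<2^t$.)

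I do not expect a genuine obstacle here; the computation is completely routine. The only point that needs care is the restriction of the indices $a,b$ to $B_t$---this is exactly what the summand $V^t_w$ of the decomposition $\tau$ isolates---because for $(a,b)$ with $a\geq 2^t$ or $b\geq 2^t$ the corresponding components of the four vectors genuinely differ (e.g.\ the $(2^t,0)$-components of $w_{0,0}$ and $w_{2^t,0}$ are $0$ and $1$), and those discrepancies are precisely what $\tau_1$ throws away. This claim, together with Claim~\ref{lin_ind}, is what will drive the parity statement of Lemma~\ref{even}.
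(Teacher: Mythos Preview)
Your proposal is correct and is exactly the approach the paper takes: the paper's own proof simply says ``this is essentially the same as the proof of Lemma~\ref{mod_two_red},'' i.e.\ invoke the congruence ${u\choose z}\equiv{v\choose z}\pmod 2$ for $u\equiv v\pmod{2^t}$ and $0\le z\le 2^t-1$, applied componentwise on $B_t$. Your additional remarks (the Lucas-theorem gloss and the caution about components outside $B_t$) are fine elaborations but not needed for the argument.
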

\begin{proof}
This is essentially the same as the proof of Lemma
~\ref{mod_two_red}.
\end{proof}

\begin{proof}[Proof of Lemma ~\ref{even}] Since the vector $\sum_{(i,j)\in S'}
v^{t+1}_{i,j}$ is the zero vector, its image under $\tau_1: V^{t+1}
\longrightarrow V^t_w$ is also zero vector. So we have
$$\aligned
\vec{0}&=\sum_{(i,j)\in S'} w_{i,j}\\
&=\sum_{(i,j)\in B_t}  |S' \cap \{(i,j), (i,j+2^t), (i+2^t,j),
(i+2^t,j+2^t)\}|  w_{i,j},
\endaligned
$$
where we have used Claim ~\ref{identical}. Then, by Claim
~\ref{lin_ind}, $|S' \cap \{(i,j), (i,j+2^t), (i+2^t,j),
(i+2^t,j+2^t)\}| \equiv 0\text{ } (\text{mod } 2)$ for every $(i,j)
\in B_t$. This completes the proof of Lemma ~\ref{even}.
\end{proof}

If $|S' \cap \{(i,j), (i,j+2^t), (i+2^t,j), (i+2^t,j+2^t)\}| =4$ for
some $(i,j) \in B_t$, then $ \{(i,j), (i,j+2^t), (i+2^t,j),
(i+2^t,j+2^t)\} \subset S' \subset S$. Then by definition
~\ref{def_hvd},  $(B_t, S|_{\text{h,v}})$ $(S|_{\text{v,d}},
S|_{\text{d,h}}$ also) has a double element hence is not
$2^t$-independent.

So we assume that $|S' \cap \{(i,j), (i,j+2^t), (i+2^t,j),
(i+2^t,j+2^t)\}| =0$ or $2$ for any $(i,j) \in B_t$. Of course we
consider only $(i,j) \in B_t$ with $|S' \cap \{(i,j), (i,j+2^t),
(i+2^t,j), (i+2^t,j+2^t)\}| =2$. For each of such $(i,j)$, depending
on elements in $S' \cap \{(i,j), (i,j+2^t), (i+2^t,j),
(i+2^t,j+2^t)\}$, we consider one and only one of the three vectors
\begin{equation}\label{add_two}
\aligned &\tau(v^{t+1}_{i,j}+ v^{t+1}_{i+2^t,j})\text{ }( \text{or }
v^{t+1}_{i,j+2^t}+ v^{t+1}_{i+2^t,j+2^t})=\vec{0}\oplus
\vec{0}\oplus v^t_{i,j} \in V^{t}_w \oplus V^{t}_y \oplus
V^{t}_z,\\
&\tau(v^{t+1}_{i,j}+ v^{t+1}_{i,j+2^t})\text{ }( \text{or }
v^{t+1}_{i+2^t,j}+ v^{t+1}_{i+2^t,j+2^t})=\vec{0}\oplus v^t_{i,j}
\oplus \vec{0} \in V^{t}_w \oplus V^{t}_y \oplus V^{t}_z,
\endaligned
\end{equation} or $$ \text{ }\tau(v^{t+1}_{i,j}+
v^{t+1}_{i+2^t,j+2^t})\text{ }( \text{or } v^{t+1}_{i+2^t,j}+
v^{t+1}_{i,j+2^t})=\vec{0}\oplus v^t_{i,j}\oplus v^t_{i,j} \in
V^{t}_w \oplus V^{t}_y \oplus V^{t}_z.
$$

These give rise to three disjoint (possibly empty but not all empty)
subsets $S_1, S_2, S_3$ of $B_t$ such that $$ \sum_{(i,j)\in S_1}
(\vec{0}\oplus \vec{0}\oplus v^t_{i,j}) + \sum_{(i,j)\in S_2}
(\vec{0}\oplus v^t_{i,j} \oplus \vec{0})  + \sum_{(i,j)\in S_3}
(\vec{0}\oplus v^t_{i,j}\oplus v^t_{i,j}) =\vec{0} $$ $$ \text{
}\text{ }\text{ }\text{ }\text{ }\text{ }\text{ }\text{ }\text{
}\text{ }\text{ }\text{ }\text{ }\text{ }\text{ }\text{ }\text{
}\text{ }\text{ }\text{ }\text{ }\text{ }\text{ }\text{ }\text{
}\text{ }\text{ }\text{ }\text{ }\text{ }\text{ }\text{ }\text{
}\text{ }\text{ }\text{ }\text{ }\text{ }\text{ }\text{ }\text{
}\text{ }\text{ }\text{ }\text{ }\text{ }\text{ }\text{ }\text{
}\text{ }\text{ }\text{ }\text{ }\text{ }\text{ }\text{ }\text{
}\text{ }\text{ }\text{ }\text{ }\text{ }\text{ }\text{ }\text{
}\text{ }\text{ }\text{ }\text{ }\text{ }\in V^{t}_w \oplus V^{t}_y
\oplus V^{t}_z.
$$
We show that the triple $(S_1\cup S_3$, $S_2\cup S_3$, $S_1\cup
S_2)$ satisfies the three conditions in $(\dag)$ in the statement of
Theorem ~\ref{main}. Recall that $V^{t}_y\cong V^{t}_z\cong V^t$.
This implies that $$\aligned &\sum_{(i,j)\in
S_1}v^t_{i,j}  + \sum_{(i,j)\in S_3}v^t_{i,j} =\vec{0} \in V^{t}_z\cong V^t,\\
&\sum_{(i,j)\in S_2}v^t_{i,j}  + \sum_{(i,j)\in S_3}v^t_{i,j}
=\vec{0} \in V^{t}_y \cong V^t,
\endaligned$$
hence
$$\aligned &\sum_{(i,j)\in S_1}v^t_{i,j}  + \sum_{(i,j)\in S_2}v^t_{i,j}+2\sum_{(i,j)\in
S_3}v^t_{i,j}\\
&=\sum_{(i,j)\in S_1}v^t_{i,j}  + \sum_{(i,j)\in S_2}v^t_{i,j}
=\vec{0} \in V^{t}.
\endaligned$$
Thus the first condition follows.

Since at least one of $S_1, S_2, S_3$ is nonempty, at most one of
$(S_1\cup S_3)$, $(S_2\cup S_3)$, and $(S_1\cup S_2)$ is empty. The
last condition follows from $S_1,S_2, S_3$ being disjoint. Proof of
Theorem ~\ref{main} is completed.
\end{proof}

\begin{proof}[Proof of Corollary ~\ref{cor1}]
If one, say $\langle S|_{\text{hori}}, T_{2^t}\rangle$, of the three
is not almost regular, then there is a nontrivial solution of
$(**)$. Let $\sum_{(i,j) \in U} x^i y^j$ be one of the solutions. If
$V=\emptyset$ and $W=U$ then the triple $(U,V,W)$ satisfies the
three conditions in $(\dag)$ in the statement of Theorem
~\ref{main}. So $\langle S, T_{2^{t+1}} \rangle$ is not almost
regular.
\end{proof}

\begin{proof}[Proof of Corollary ~\ref{cor2}]
Without loss of generality, suppose that $\langle S|_{\emph{h,v}},
T_{2^t}\rangle$ and $\langle S|_{\emph{v,d}}, T_{2^t}\rangle$ are
almost regular. If $\langle S, T_{2^{t+1}} \rangle$ were not almost
regular, then there would be a triple $(U,V,W)$ satisfying the three
conditions in $(\dag)$. But since $U$ and $V$ are empty, $(U,V,W)$
cannot satisfy the third condition.
\end{proof}

\section{Application}

We show that the linear system of plane curves of degree $d=26$
passing through $10$ general base points with $m_1=m_2=9$,
$m_3=...=m_{10}=8$ is empty. In this case, $S=T_{d+1}=T_{27}$. We
apply Dumnicki-Jarnicki reduction method \cite{DJ:new} and
Dumnicki's cutting diagram method \cite{Du:red} to $8$ points $z_1,
... , z_8$. Then we use Theorem ~\ref{main} several times to check
that the division below determines a unique nonzero (in
characteristic 2) monomial of the form $\prod_{q=1}^{10}
\frac{x_q^{\sum_{q \text{ is at } (i,j)} i} y_q^{\sum_{q \text{ is
at }(i,j)} j}}{x_q^{\sum_{(i,j)\in T_{m_q}} i} y_q^{\sum_{(i,j)\in
T_{m_q}} j}}$ in the determinant of the interpolation matrix $(*)$.
We remark that Bezout-Dumnicki lemma (\cite{Du:red}, Lemma 20) does
not work in positive characteristic and that even in characteristic
zero, it cannot cover the central region corresponding to the point
$z_{10}$.


$$\aligned
&2\\
&2\text{ }\text{ }2\\
&2\text{ }\text{ }2\text{ }\text{ }2\\
&2\text{ }\text{ }2\text{ }\text{ }2\text{ }\text{ }2\\
&2\text{ }\text{ }2\text{ }\text{ }2\text{ }\text{ }2\text{ }\text{ }2\\
&2\text{ }\text{ }2\text{ }\text{ }2\text{ }\text{ }2\text{ }\text{ }2\text{ }\text{ }2\\
&2\text{ }\text{ }2\text{ }\text{ }2\text{ }\text{ }2\text{ }\text{ }2\text{ }\text{ }2\text{ }\text{ }2\\
&2\text{ }\text{ }2\text{ }\text{ }2\text{ }\text{ }2\text{ }\text{ }2\text{ }\text{ }2\text{ }\text{ }2\text{ }\text{ }2\\
&2\text{ }\text{ }2\text{ }\text{ }2\text{ }\text{ }2\text{ }\text{ }2\text{ }\text{ }2\text{ }\text{ }2\text{ }\text{ }2\text{ }\text{ }2\\
&5\text{ }\text{ }5\text{ }\text{ }5\text{ }\text{ }5\text{ }\text{ }5\text{ }\text{ }5\text{ }\text{ }5\text{ }\text{ }5\text{ }\text{ }7\text{ }\text{ }7\\
&5\text{ }\text{ }5\text{ }\text{ }5\text{ }\text{ }5\text{ }\text{ }5\text{ }\text{ }5\text{ }\text{ }5\text{ }\text{ }7\text{ }\text{ }7\text{ }\text{ }7\text{ }\text{ }7\\
&5\text{ }\text{ }5\text{ }\text{ }5\text{ }\text{ }5\text{ }\text{ }5\text{ }\text{ }5\text{ }\text{ }7\text{ }\text{ }7\text{ }\text{ }7\text{ }\text{ }7\text{ }\text{ }7\text{ }\text{ }6\\
&5\text{ }\text{ }5\text{ }\text{ }5\text{ }\text{ }5\text{ }\text{ }5\text{ }\text{ }7\text{ }\text{ }7\text{ }\text{ }7\text{ }\text{ }7\text{ }\text{ }7\text{ }\text{ }7\text{ }\text{ }7\text{ }\text{ }6\\
&5\text{ }\text{ }5\text{ }\text{ }5\text{ }\text{ }5\text{ }\text{ }7\text{ }\text{ }7\text{ }\text{ }7\text{ }\text{ }7\text{ }\text{ }7\text{ }\text{ }7\text{ }\text{ }7\text{ }\text{ }7\text{ }\text{ }6\text{ }\text{ }6\\
&5\text{ }\text{ }5\text{ }\text{ }5\text{ }\text{ }8\text{ }\text{ }8\text{ }\text{ }0\text{ }\text{ }0\text{ }\text{ }7\text{ }\text{ }7\text{ }\text{ }7\text{ }\text{ }7\text{ }\text{ }7\text{ }\text{ }7\text{ }\text{ }6\text{ }\text{ }6\\
&5\text{ }\text{ }5\text{ }\text{ }8\text{ }\text{ }8\text{ }\text{ }8\text{ }\text{ }0\text{ }\text{ }0\text{ }\text{ }0\text{ }\text{ }0\text{ }\text{ }0\text{ }\text{ }7\text{ }\text{ }7\text{ }\text{ }7\text{ }\text{ }6\text{ }\text{ }6\text{ }\text{ }6\\
&5\text{ }\text{ }8\text{ }\text{ }8\text{ }\text{ }8\text{ }\text{ }8\text{ }\text{ }8\text{ }\text{ }0\text{ }\text{ }0\text{ }\text{ }0\text{ }\text{ }0\text{ }\text{ }0\text{ }\text{ }0\text{ }\text{ }0\text{ }\text{ }7\text{ }\text{ }6\text{ }\text{ }6\text{ }\text{ }6\\
&8\text{ }\text{ }8\text{ }\text{ }8\text{ }\text{ }8\text{ }\text{ }8\text{ }\text{ }8\text{ }\text{ }8\text{ }\text{ }0\text{ }\text{ }0\text{ }\text{ }0\text{ }\text{ }0\text{ }\text{ }0\text{ }\text{ }0\text{ }\text{ }9\text{ }\text{ }6\text{ }\text{ }6\text{ }\text{ }6\text{ }\text{ }6\\
&1\text{ }\text{ }8\text{ }\text{ }8\text{ }\text{ }8\text{ }\text{ }8\text{ }\text{ }8\text{ }\text{ }8\text{ }\text{ }0\text{ }\text{ }0\text{ }\text{ }0\text{ }\text{ }0\text{ }\text{ }0\text{ }\text{ }9\text{ }\text{ }9\text{ }\text{ }9\text{ }\text{ }6\text{ }\text{ }6\text{ }\text{ }6\text{ }\text{ }6\\
&1\text{ }\text{ }1\text{ }\text{ }4\text{ }\text{ }8\text{ }\text{ }8\text{ }\text{ }8\text{ }\text{ }8\text{ }\text{ }8\text{ }\text{ }0\text{ }\text{ }0\text{ }\text{ }0\text{ }\text{ }0\text{ }\text{ }9\text{ }\text{ }9\text{ }\text{ }9\text{ }\text{ }6\text{ }\text{ }6\text{ }\text{ }6\text{ }\text{ }6\text{ }\text{ }3\\
&1\text{ }\text{ }1\text{ }\text{ }1\text{ }\text{ }4\text{ }\text{ }4\text{ }\text{ }8\text{ }\text{ }8\text{ }\text{ }8\text{ }\text{ }8\text{ }\text{ }0\text{ }\text{ }0\text{ }\text{ }0\text{ }\text{ }9\text{ }\text{ }9\text{ }\text{ }9\text{ }\text{ }9\text{ }\text{ }6\text{ }\text{ }6\text{ }\text{ }6\text{ }\text{ }3\text{ }\text{ }3\\
&1\text{ }\text{ }1\text{ }\text{ }1\text{ }\text{ }1\text{ }\text{ }4\text{ }\text{ }4\text{ }\text{ }4\text{ }\text{ }8\text{ }\text{ }8\text{ }\text{ }8\text{ }\text{ }0\text{ }\text{ }0\text{ }\text{ }9\text{ }\text{ }9\text{ }\text{ }9\text{ }\text{ }9\text{ }\text{ }6\text{ }\text{ }6\text{ }\text{ }6\text{ }\text{ }3\text{ }\text{ }3\text{ }\text{ }3\\
&1\text{ }\text{ }1\text{ }\text{ }1\text{ }\text{ }1\text{ }\text{ }1\text{ }\text{ }4\text{ }\text{ }4\text{ }\text{ }4\text{ }\text{ }4\text{ }\text{ }8\text{ }\text{ }0\text{ }\text{ }9\text{ }\text{ }0\text{ }\text{ }9\text{ }\text{ }9\text{ }\text{ }9\text{ }\text{ }9\text{ }\text{ }6\text{ }\text{ }6\text{ }\text{ }3\text{ }\text{ }3\text{ }\text{ }3\text{ }\text{ }3\\
&1\text{ }\text{ }1\text{ }\text{ }1\text{ }\text{ }1\text{ }\text{ }1\text{ }\text{ }1\text{ }\text{ }4\text{ }\text{ }4\text{ }\text{ }4\text{ }\text{ }4\text{ }\text{ }4\text{ }\text{ }9\text{ }\text{ }9\text{ }\text{ }9\text{ }\text{ }9\text{ }\text{ }9\text{ }\text{ }9\text{ }\text{ }6\text{ }\text{ }6\text{ }\text{ }3\text{ }\text{ }3\text{ }\text{ }3\text{ }\text{ }3\text{ }\text{ }3\\
&1\text{ }\text{ }1\text{ }\text{ }1\text{ }\text{ }1\text{ }\text{ }1\text{ }\text{ }1\text{ }\text{ }1\text{ }\text{ }4\text{ }\text{ }4\text{ }\text{ }4\text{ }\text{ }4\text{ }\text{ }4\text{ }\text{ }4\text{ }\text{ }9\text{ }\text{ }9\text{ }\text{ }9\text{ }\text{ }9\text{ }\text{ }9\text{ }\text{ }6\text{ }\text{ }3\text{ }\text{ }3\text{ }\text{ }3\text{ }\text{ }3\text{ }\text{ }3\text{ }\text{ }3\\
&1\text{ }\text{ }1\text{ }\text{ }1\text{ }\text{ }1\text{ }\text{ }1\text{ }\text{ }1\text{ }\text{ }1\text{ }\text{ }1\text{ }\text{ }4\text{ }\text{ }4\text{ }\text{ }4\text{ }\text{ }4\text{ }\text{ }4\text{ }\text{ }4\text{ }\text{ }4\text{ }\text{ }9\text{ }\text{ }9\text{ }\text{ }9\text{ }\text{ }6\text{ }\text{ }3\text{ }\text{ }3\text{ }\text{ }3\text{ }\text{ }3\text{ }\text{ }3\text{ }\text{ }3\text{ }\text{ }3\\
&1\text{ }\text{ }1\text{ }\text{ }1\text{ }\text{ }1\text{ }\text{
}1\text{ }\text{ }1\text{ }\text{ }1\text{ }\text{ }1\text{ }\text{
}1\text{ }\text{ }4\text{ }\text{ }4\text{ }\text{ }4\text{ }\text{
}4\text{ }\text{ }4\text{ }\text{ }4\text{ }\text{ }4\text{ }\text{
}4\text{ }\text{ }9\text{ }\text{ }9\text{ }\text{ }3\text{ }\text{
}3\text{ }\text{ }3\text{ }\text{ }3\text{ }\text{ }3\text{ }\text{
}3\text{ }\text{ }3\text{ }\text{ }3
\endaligned$$


\end{document}